\newtheorem{thm}{Theorem}[section]
\newtheorem{cor}[thm]{Corollary}
\newtheorem{lem}[thm]{Lemma}
\newtheorem{prop}[thm]{Proposition}
\theoremstyle{definition}
\newtheorem{defn}[thm]{Definition}
\newtheorem{example}[thm]{Example}
\newtheorem{rem}[thm]{Remark}
\newtheorem{ques}[thm]{Question}
\numberwithin{equation}{section}
\newcommand{\norm}[1]{\left\Vert#1\right\Vert}
\newcommand{\eps}{\varepsilon}
\newcommand{\card}{\operatorname{card}}
\newcommand{\ldens}{\operatorname{\underline{dens}}}
\newcommand{\udens}{\operatorname{\overline{dens}}}
\newcommand{\budens}{\operatorname{\overline{Bd}}}
\newcommand{\bldens}{\operatorname{\underline{Bd}}}
\newcommand{\spa}{\operatorname{span}}      
\begin{document}
\title[Reiterative distributional chaos on Banach spaces]{Reiterative distributional chaos on Banach spaces}

\author{Antonio Bonilla}
\address{Departamento de An\' alisis Matem\' atico,
Universidad de la Laguna, 38271, La Laguna (Tenerife), Spain}
\email{abonilla@ull.es}

\author{Marko Kosti\' c}
\address{Faculty of Technical Sciences,
University of Novi Sad,
Trg D. Obradovi\' ca 6, 21125 Novi Sad, Serbia}
\email{marco.s@verat.net}

{\renewcommand{\thefootnote}{} \footnote{2010 {\it Mathematics
Subject Classification.}  47A16.
\\ \text{  }  \ \    {\it Key words and phrases.} Reiterative distributional chaos; distributional chaos; hypercyclicity; Li-Yorke chaos; irregular vectors.
\\  \text{  }  \ \ The first author was supported by MINECO and FEDER,
Project MTM2016-75963-P.The  second author was partially supported by grant 174024 of Ministry of Science and Technological Development, Republic of Serbia.}}

\begin{abstract}
If we change the upper and lower density in the definition of distributional chaos of a continuous linear operator on Banach space by the Banach upper and Banach lower density, respectively, we  obtain Li-Yorke chaos. Motivated by this fact, we introduce the notions of reiterative distributional chaos of types $1$, $1^+$ and  $2$ for continuous linear operators on Banach spaces, which are  characterized in terms of the existence of an irregular vector  with additional properties. Moreover, we study its relations with other dynamical properties and give conditions for the existence of a vector subspace $Y$ of $X$ such that every non-zero vector
 in $Y$ is both irregular for $T$  and distributionally near to zero for $T.$
\end{abstract}

\maketitle
\section{Introduction and preliminaries}

 Let $X$ be a Banach space and $B(X)$ denote the space
of all bounded linear operators $T : X \to X$.

\smallskip
Recall that an operator $T \in B(X)$ is said to be {\em Li-Yorke chaotic}
if there exists an uncountable set $\Gamma\subset X$ such that
for every pair $(x,y) \in \Gamma \times \Gamma$ of distinct points, we have
\[
 \liminf_{n \to \infty} \|T^n x - T^n y\| = 0
 \ \ \mbox{ and } \ \
 \limsup_{n \to \infty} \|T^n x - T^n y\| > 0.
\]
In this case, $\Gamma$ is called a {\em scrambled set} for $T$ and each
such pair $(x,y)$ is called a {\em Li-Yorke pair} for $T$. It was the first notion of chaos
in the mathematical literature, introduced by Li and Yorke \cite{LY} in the context of interval maps,  and it became very popular.

If the orbit of $x$ is simultaneously  near to $0$ and
unbounded, then $x$ is called a {\em
irregular vector} for $T$ (see \cite{bea-b}). It was proved in \cite{2011} that $T$ is
 Li-Yorke  chaotic if and only if $T$ admits an irregular vector.

An operator $T$ on a Banach space $X$ is said to be \emph{mean Li-Yorke
chaotic} if there is an uncountable subset $S$ of $X$ (a \emph{mean Li-Yorke
set} for $T$) such that every pair $(x,y)$ of distinct
points in $S$ is a \emph{mean Li-Yorke pair} for $T$, in the sense that
$$
\liminf_{N \to \infty} \frac{1}{N} \sum_{j=1}^N \|T^jx - T^jy\| = 0
\ \ \ \ \mbox{  and } \ \ \ \
\limsup_{N \to \infty} \frac{1}{N} \sum_{j=1}^N \|T^jx - T^jy\| > 0.
$$

Given an operator $T$ and a vector $x$, we say that $x$ is an
\emph{absolutely mean irregular}  \emph{vector}
for $T$ if
$$
\liminf_{N \to \infty} \frac{1}{N} \sum_{j=1}^N \|T^j x\| = 0
\ \ \ \ \mbox{  and } \ \ \ \
\limsup_{N \to \infty} \frac{1}{N} \sum_{j=1}^N \|T^j x\| = \infty .
$$

It was proved in \cite{ARXIV} that $T$ is mean
 Li-Yorke  chaotic if and only if $T$ admits an {absolutely mean irregular vector.

\smallskip
If $A \subset \Bbb N$, recall that the {\em lower} and the {\em upper
densities} of $A$ are defined as
$$
\ldens(A):= \liminf_{n \to \infty} \frac{\card(A \cap [1,n])}{n}
 \ \ \text{ and } \ \
\udens(A):= \limsup_{n \to \infty} \frac{\card(A \cap [1,n])}{n},
$$
respectively. Moreover, the {\it Banach upper density} of $A$ is given by
$$
\budens(A):= \lim _{n \to \infty} \limsup_{m \to \infty}
             \frac{\card (A \cap [m+1,m+n])}{n}\cdot
$$
and
the {\it Banach lower density} of $A$  by
$$
\bldens(A):= \lim _{n \to \infty} \liminf_{m \to \infty}
             \frac{\card (A \cap [m+1,m+n])}{n}\cdot
$$

Given $T \in B(X)$, $x ,y \in X$ and $\delta > 0$, let
\begin{equation}
F_{x,y}(\delta):= \ldens(\{j \in \Bbb N : \|T^j x - T^j y\| < \delta\})\label{ld}
\end{equation}
and
\begin{equation}
F_{x,y}^*(\delta):= \udens(\{j \in \Bbb N : \|T^j x - T^j y\| < \delta\}).\label{ud}
\end{equation}

If the pair $(x,y)$ satisfies

 {(DC1)} $F^*_{x,y} \equiv 1$ and $F_{x,y}(\eps) = 0$
              for some $\eps > 0$, or

 {(DC2)} $F^*_{x,y} \equiv 1$ and $F_{x,y}(\eps) < 1$
              for some $\eps > 0$, or

 {(DC2$\frac{1}{2}$)} There exist $c > 0$ and $r > 0$ such that
              $F_{x,y}(\delta) < c < F^*_{x,y}(\delta)$
              for all $ 0 < \delta < r$

then $(x,y)$ is called a {\em distributionally chaotic pair of type}
$k \in \{1,2,2\frac{1}{2}\}$ for $T$. The operator $T$ is said to be
{\em distributionally chaotic of type} $k$ if there exists an uncountable set
$\Gamma \subset X$ such that every pair $(x,y)$ of distinct points in
$\Gamma$ is a distributionally chaotic pair of type $k$ for $T$. In this case,
$\Gamma$ is a {\em distributionally scrambled set of type $k$} for $T$. This notion was introduced by Schweizer and Sm\'{\i}tal  \cite{SS94} for interval maps.

\smallskip
In the context of linear dynamics, it is usual to say that an operator
$T \in B(X)$ is distributionally chaotic if there exist an uncountable
set $\Gamma \subset X$ and an $\eps > 0$ such that for every pair $(x,y)$
of distinct points in $\Gamma$, we have that $F_{x,y}^* \equiv 1$ and
$F_{x,y}(\eps) = 0;$ as it is well known, this type of chaos is equivalent with distributional chaos of type $1$ for operators on Banach spaces  (see \cite{2011,2013JFA}, where further references
can be found).

\smallskip

We say that $T \in B(X)$ is {\em densely distributionally chaotic} if $\Gamma$ is  uncountable
{\em dense} set.

\smallskip
Let $T \in B(X)$ and $x \in X$. The orbit of $x$ is said to be
{\em distributionally near to $0$} if there exists $A \subset \Bbb N$ with
$\udens(A) = 1$ such that $\lim_{n \in A} T^nx = 0$. We say that $x$ has a
{\em distributionally unbounded orbit} if there exists $B \subset \Bbb N$ with
$\udens(B) = 1$ such that $\lim_{n \in B} \|T^nx\| = \infty$.
If the orbit of $x$ is simultaneously distributionally near to $0$ and
distributionally unbounded, then $x$ is called a {\em distributionally
irregular vector} for $T$. It was proved in \cite{2013JFA} that $T$ is
 distributionally chaotic if and only if $T$ admits a
distributionally irregular vector. A {\em distributionally irregular manifold}
for $T$ is a vector subspace $Y$ of $X$ such that every non-zero vector
$y$ in $Y$ is distributionally irregular for $T$.

\smallskip

Given $T \in B(X)$, $x ,y \in X$ and $\delta > 0$, let
\begin{equation}
BF_{x,y}(\delta):= \bldens(\{j \in \Bbb N : \|T^j x - T^j y\| < \delta\})\label{ld}
\end{equation}
and
\begin{equation}
BF_{x,y}^*(\delta):= \budens(\{j \in \Bbb N : \|T^j x - T^j y\| < \delta\}).\label{ud}
\end{equation}

\begin{prop}\label{zajeb}
Suppose that $X$ is a Banach space and $T\in L(X)$. If $T$  Li-Yorke chaotic, then there exist $x$ and subsets $A,B$ of $\Bbb N$ with  $\overline{Bd}(A)=\overline{Bd}(B)=1$ such that  $T^nx$ tends to zero with $n\in A$ and   $\|T^nx\|$ tends to $\infty$ with $n\in B$.
\end{prop}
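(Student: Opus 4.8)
The plan is to let $x$ be an irregular vector for $T$, which exists by the quoted theorem of \cite{2011} identifying Li-Yorke chaos with the presence of an irregular vector. Thus I fix $u\in X$ with $\liminf_{n\to\infty}\|T^n u\|=0$ and $\limsup_{n\to\infty}\|T^n u\|=\infty$ and put $x=u$. Both $A$ and $B$ will be built as disjoint unions of finite integer blocks of strictly increasing lengths, planted arbitrarily far out in $\mathbb{N}$. The reason this shape yields Banach upper density $1$ is elementary: if, for every window size $n$, the set contains blocks of length $\ge n$ occurring with $m\to\infty$, then $\limsup_{m\to\infty}\card(A\cap[m+1,m+n])/n=1$ for that $n$, and letting $n\to\infty$ gives $\budens(A)=1$ (and likewise for $B$).

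To produce $A$ I would exploit that short-window iteration cannot blow up a small vector. Pick $p_k\to\infty$ with $\|T^{p_k}u\|\to 0$, and for each length $L$ set $C_L:=\max_{0\le i\le L}\|T^i\|$. The bound $\|T^{p_k+i}u\|\le\|T^i\|\,\|T^{p_k}u\|\le C_L\,\|T^{p_k}u\|$ shows the whole forward block $[p_k,p_k+L]$ carries orbit values at most $C_L\|T^{p_k}u\|\to 0$. Choosing $k_L$ so large that this block lies past all previously selected blocks and every orbit value on it is below $1/L$, and letting $A$ be the union of these blocks over $L=1,2,3,\dots$, I obtain $\lim_{n\in A}T^n u=0$ (the blocks are met in order of increasing $L$) and $\budens(A)=1$.

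The construction of $B$ is the step I expect to be the real obstacle, because pushing $T$ forward from a time where the orbit is large need not keep it large. The fix is to fill the block backwards from a large value: taking $q_k$ with $\|T^{q_k}u\|\to\infty$ and writing $\|T^{q_k}u\|=\|T^{L-i}(T^{q_k-L+i}u)\|\le C_L\,\|T^{q_k-L+i}u\|$ gives $\|T^{q_k-L+i}u\|\ge\|T^{q_k}u\|/C_L$ for all $0\le i\le L$, so the entire backward block $[q_k-L,q_k]$ carries orbit values at least $\|T^{q_k}u\|/C_L$. Choosing $k_L$ so large that this quantity exceeds $L$ and the block lies past all previous ones, and assembling these backward blocks exactly as for $A$, yields $B$ with $\lim_{n\in B}\|T^n u\|=\infty$ and $\budens(B)=1$. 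The remaining points are purely bookkeeping: the greedy placement guarantees the blocks are disjoint and tend to infinity, and the increasing-length scheme guarantees the two limits along $A$ and $B$.
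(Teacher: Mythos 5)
Your proof is correct and follows essentially the same route as the paper: take an irregular vector, attach forward blocks of growing length to times where the orbit is small and backward blocks ending at times where the orbit is large, and note that arbitrarily long blocks placed arbitrarily far out force Banach upper density $1$. Your use of $C_L:=\max_{0\le i\le L}\|T^i\|$ with greedy placement is just a cosmetic variant of the paper's explicit $\|T\|^{k}$ and $2^{\pm k^{2}}$ bookkeeping.
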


\begin{proof}
Suppose that $T$ is Li-Yorke chaotic, then it is clear that $\|T\|>1.$
By \cite[Theorem 5]{2011}, we have the existence of a Li-Yorke irregular vector for $T,$ i.e., there exists a vector $x\in X$ such that
$\liminf_{n\rightarrow\infty}\| T^{n}x\|=0$ and $\limsup_{n\rightarrow\infty}\| T^{n}x\|=\infty.$ Therefore, there exist two strictly increasing sequences of positive integers $(n_{k})$ and $(l_{k})$ such that $\min(n_{k+1}-n_{k},l_{k+1}-l_{k})>k^{2},$ $\| T^{n_{k}}x\|<2^{-k^{2}}$ and $\| T^{l_{k}}x\|>2^{k^{2}}$ for all $k\in {\mathbb N}.$ Set $A:=\bigcup_{k\in {\mathbb N}}[n_{k},n_{k}+k]$ and
$B:=\bigcup_{k\in {\mathbb N}}[l_{k}-k,l_{k}].$ Then $\overline{Bd}(A)=\overline{Bd}(B)=1$ and for each $n\in A$ ($n\in B$) there exists
$k\in {\mathbb N}$ such that $n\in [n_{k},n_{k}+k]$ ($n\in [l_{k}-k,l_{k}]$) and therefore $\|T^{n}x\|\leq \|T\|^{k}2^{-k^{2}}$
($\|T^{n}x\|\geq \|T\|^{-k}2^{k^{2}}$). This completes the proof in a routine manner.
\end{proof}

 \begin{rem} If we change the upper and lower density  in the definition of distributional chaos, by Banach upper and Banach lower density, we thus obtain Li-Yorke chaos.
\end{rem}

\begin{defn}
 We say that $T$ is {\it reiteratively distributional chaotic of type $1$} if if there exists an uncountable set $\Gamma\subset X$ and some $\eps > 0$ such that
for every pair $(x,y) \in \Gamma \times \Gamma$ of distinct points $BF_{x,y}(\eps) = 0$
 and there exist $c > 0$ and $r > 0$ such that $c \le F^*_{x,y}(\delta)$ for all $ 0 < \delta < r$. If $F^*_{x,y}\equiv 1$, we say that $T$ is {\it reiteratively distributional chaotic of type $1^+$.}

We say that $T$ is {\it reiteratively distributional chaotic of type $2$} if  there exists an uncountable set $\Gamma\subset X$ and some $\eps > 0$ such that
for every pair $(x,y) \in \Gamma \times \Gamma$ of distinct points is $BF^*_{x,y}\equiv 1$  and $F_{x,y}(\eps) < 1$.

\end{defn}

Let us also recall that an operator $T \in B(X)$ is said to be
{\em frequently hypercyclic} (FH), {\em upper-frequently hypercyclic} (UFH),
{\em reiteratively hypercyclic} (RH) or {\em hypercyclic} (H) if there exists
a point $x \in X$ such that for every nonempty open subset $U$ of $X$, the set
$$
\{n \in \Bbb N : T^n x \in U\}
$$
has positive lower density, positive upper density, positive upper
Banach density or is nonempty, respectively. Such a point $x$ is called a
{\em frequently hypercyclic}, {\em upper-frequently hypercyclic},
{\em reiteratively hypercyclic} or {\em hypercyclic vector} for $T$,
respectively.
Moreover, $T$ is {\em mixing} if for every nonempty open sets $U,V \subset X$,
there exists $n_0 \in \Bbb N$ such that $T^n(U) \cap V \neq \emptyset$ for all
$n \geq n_0$;, $T$ is {\em weakly-mixing} if $T \oplus T$ is hypercyclic, and
$T$ is {\em Devaney chaotic} if it is hypercyclic and has a dense set of
periodic points. See \cite{BaGr06,bayart,2013JFA,BoGE07,erdper}, for instance.

\smallskip
Moreover, we would like to remind the readers of the famous
Frequent Hypercyclicity Criterion (FHC) \cite[Theorem 9.9]{erdper}.
This criterion  implies frequent hypercyclicity,  Devaney
chaos, mixing, distributional chaos and mean Li-Yorke  \cite{BaGr06, BoGE07, 2013JFA, erdper, ARXIV}.

This research report is organized as follows. In the section 2, we characterize reiterative distributional chaos of type 1, $1^+ $ and 2 in terms of existence of an irregular vector $x$ with additional properties. Moreover, we study relations with other dynamical properties.
In the section 3, we give conditions for existence of  a dense  reiteratively distributionally irregular manifold of type $1^+$ for $T$.

\section{Reiterative distributional chaos}

\begin{lem}\label{lemma3}
Let $T \in B(X)$. If $T$ admits a reiterative distributionally chaotic pair of type $
2,$ then $T$ admits a distributionally unbounded orbit.
\end{lem}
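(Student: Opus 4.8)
The plan is to work with the single vector $z := x - y$ and translate the hypotheses on the pair into two facts about the orbit $(T^n z)_n$. Writing $A := \{n \in \Bbb N : \|T^n z\| \ge \eps\}$, the condition $F_{x,y}(\eps) < 1$ says $\ldens(A^c) < 1$, hence $\udens(A) = 1 - \ldens(A^c) =: a > 0$; so the orbit of $z$ stays $\ge \eps$ on a set of positive upper density. The condition $BF^*_{x,y} \equiv 1$ says that for every $\delta > 0$ the set $\{n : \|T^n z\| < \delta\}$ has Banach upper density $1$; equivalently, there are arbitrarily long intervals on which $\|T^n z\| < \delta$ off a vanishingly small proportion, and in particular there are times $p$ (which may be taken arbitrarily large) with $\|T^p z\|$ as small as we like. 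I would first record that these two facts are exactly what is needed, and note that a single bounded orbit can satisfy both (e.g. one oscillating between $0$ and $\eps$), so no scalar multiple of $z$ can itself be distributionally unbounded: a genuine gliding-hump construction producing a new vector $w$ is required.

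The engine of the construction is the amplification ratio supplied by $BF^*_{x,y}\equiv 1$: for a small-orbit time $p$ the vector $T^p z$ has tiny norm $\|T^p z\| < \delta$, yet its forward orbit satisfies $\|T^n (T^p z)\| = \|T^{n+p} z\| \ge \eps$ whenever $n + p \in A$, i.e. on the translate $A - p$, which again has upper density $a$. Scaling by $\lambda > k/\eps$ gives a vector $\lambda T^p z$ of norm $< \lambda\delta$ whose orbit exceeds $k$ on $A - p$; choosing $\delta < \eps\, 2^{-k}/k$ keeps this norm below $2^{-k}$. Thus arbitrarily small perturbations can force the orbit above any prescribed level. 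Using $BF^*_{x,y}\equiv 1$ again, I would fix "clean" windows $I_k = (m_k, m_k + n_k]$ with $n_k \to \infty$ and $m_k/n_k \to 0$ (so $I_k$ fills a proportion tending to $1$ of $[1,m_k+n_k]$ and dwarfs its own history) on which $\|T^n z\|$ is small off a vanishing fraction. Since $I_k$ is finite and $A$ is infinite, finitely many shifts $p_{k,1},\dots,p_{k,r_k}$ can be chosen so that the translates $A - p_{k,j}$ cover at least a $(1-1/k)$-proportion of $I_k$; the block $w_k := \sum_j \lambda_{k,j} T^{p_{k,j}} z$ (with $\|w_k\| \le 2^{-k}$) is intended to make $\|T^n w_k\| > k$ there, and $w := \sum_k w_k$ is the candidate vector. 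Granting $\|T^n w\| > k$ on a $(1-1/k)$-proportion of each $I_k$, the set $B := \bigcup_k \{n \in I_k : \|T^n w\| > k\}$ satisfies $\udens(B) = 1$ (because $I_k$ dominates $[1,m_k+n_k]$) and $\|T^n w\| \to \infty$ along $B$, which is exactly a distributionally unbounded orbit.

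The hard part is \emph{interference control}, and it has two layers. Within a single window, superposing several shifts $\lambda_{k,j} T^{p_{k,j}} z$ to cover a proportion close to $1$ risks cancellation at the times covered by more than one translate, so the shifts and coefficients must be chosen (greedily, arranging the large-sets $A-p_{k,j}$ to be essentially disjoint over $I_k$, or after a sign/coefficient adjustment) so that the bound $\|T^n w_k\| > k$ survives. Across windows, I must ensure the tail $\sum_{l \ne k} T^n w_l$ stays small on $I_k$: the high-index part is handled by summability $\|w_l\| \le 2^{-l}$ together with the uniform smallness from choosing each $p_{l,j}$ among small-orbit times, while the finitely many low-index blocks $w_1,\dots,w_{k-1}$ are neutralized by choosing $I_k$ far enough out (using the long clean stretches from $BF^*_{x,y}\equiv 1$) that their orbits are small throughout $I_k$. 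Making these choices simultaneously compatible is the delicate point; I expect it is cleanest either as an inductive gliding-hump with the windows chosen after the blocks, or via a Baire-category argument showing that the vectors with distributionally unbounded orbit form a residual set once the above building blocks are available. I would model this step on the distributionally-unbounded-orbit constructions in \cite{2013JFA}.
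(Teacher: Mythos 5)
Your opening reductions are fine and agree with the paper's first step: with $z=x-y$, the set $A=\{n:\|T^nz\|\ge\eps\}$ has $\udens(A)=a>0$, the sets $D_\delta=\{n:\|T^nz\|<\delta\}$ have Banach upper density $1$, and the shifted vectors $T^pz$, $p\in D_\delta$, are small vectors whose orbits are large on translates of $A$. The proposal breaks down at the ``clean windows'' step. From $\budens(D_\delta)=1$ you cannot extract windows $I_k=(m_k,m_k+n_k]$ with $m_k/n_k\to 0$ on which the orbit is mostly small: Banach upper density only locates long good windows at uncontrolled positions, and in general $m_k/n_k\to\infty$. In fact, the existence of mostly-small windows with $m_k/n_k\to 0$ is \emph{equivalent} to $\udens(D_\delta)=1$ for each $\delta$, i.e.\ to the ordinary-density condition $F^*_{x,y}\equiv 1$ --- precisely the stronger hypothesis that the ``reiterative'' definition drops, and avoiding it is the whole point of the lemma. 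This is fatal to your final density count: your large set $B$ lives inside $\bigcup_k I_k$, and when $m_k\gg n_k$ that union can have ordinary upper density $0$; so even granting every other claim, your $w$ would only have an orbit unbounded along a set of \emph{Banach} upper density one, which is not a distributionally unbounded orbit. There are two further gaps: (1) your covering step needs shifts $p$ that lie in $D_\delta$ (for the norm bound on $w_k$) \emph{and} satisfy $n+p\in A$; the observation ``finitely many translates of an infinite set cover a finite window'' is valid only for unconstrained shifts. With the constraint it can fail: $\|T^{n+p}z\|\le\|T\|^n\|T^pz\|<\|T\|^n\delta$ shows no $n$ with $\|T\|^n\le\eps/\delta$ is coverable at all, and nothing in the hypotheses prevents $D_\delta\cap(A-n)=\emptyset$ for many more $n$ (note $D_\delta$ may have ordinary upper density $0$). (2) The ``interference control,'' which you yourself flag as the delicate point and defer to \cite{2013JFA}, is exactly where the real work would lie, so the proposal is in any case an incomplete sketch.

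The paper's proof avoids all of this by using the two hypotheses in the opposite roles and reducing to a known criterion. It uses $BF^*_{x,y}\equiv 1$ only to produce a null sequence: pick $n_k$ with $\|T^{n_k}u\|<1/k$ (where $u=x-y$) and set $y_k:=T^{n_k}u\to 0$. It uses $F_{x,y}(\eps)<1$, i.e.\ the \emph{ordinary} upper density of $A$, which is shift-invariant, to conclude that every $y_k$ has orbit $>\zeta$ on a set of upper density $\xi>0$, hence $\card\{1\le j\le N_k:\|T^jy_k\|>\eps'\}\ge\eps' N_k$ for suitable $N_k$, with $\eps'=\min\{\zeta,\xi/2\}$. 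Then \cite[Proposition 8]{2013JFA} applies directly: that criterion requires largeness only on a fixed $\eps'$-proportion of the initial segments $[1,N_k]$ --- not on a proportion tending to one --- and the upgrade to a genuine density-one unbounded orbit is performed inside the cited result. If you want a self-contained argument, what you must reprove is exactly that proposition; as written, your plan attempts the density-one upgrade by hand inside the Banach-density windows, and that is where it goes wrong.
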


\begin{proof}
Since $T$ admits a reiterative distributionally chaotic pair of type 2,
there exist $u \in X$ and $\zeta > 0$ such that
$$
\xi:= \overline{dens}(\{j \in \Bbb N : \|T^j u\| > \zeta\}) > 0
$$
and there exists an increasing sequence $(n_k)$
in $\Bbb N$ such that $\|T^{n_k} u\| < 1/k$ for all $k \in \Bbb N$.

Let $y_k:= T^{n_k} u$ and $\eps:= \min\{\zeta,\frac{\xi}{2}\}$.
Then $\lim_{k \to \infty} y_k = 0$ and
$$
\overline{dens}(\{j \in \Bbb N : \|T^j y_k\| > \zeta\}) = \xi > \eps \ \ \ (k \in \Bbb N).
$$
Hence, there exists an increasing sequence $(N_k)$ in $\Bbb N$ such that
$$
card(\{1 \leq j \leq N_k : \|T^j y_k\| > \eps\}) \geq \eps N_k,
$$
for all $k \in \Bbb N$. Therefore, by \cite[Proposition 8]{2013JFA}, there exists
$y \in X$ with distributionally unbounded orbit.
\end{proof}

\begin{lem} \label{lemma4}
Let  $T\in B(X).$ Suppose that there exists a dense subset $X_0\subset X$ and a subset $A$ of $\Bbb N$  with  $\udens(A) = c>0$ such that each $x\in X_0$, $\displaystyle\lim_{n \in A} T^nx = 0$. Then the set of all vectors with orbits satisfying that $\displaystyle\lim_{n \in B} T^nx = 0$ with  $\udens(B) \ge c$ is residual.
\end{lem}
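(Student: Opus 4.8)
The plan is to exhibit a dense $G_\delta$ set contained in the target set
$$
\Lambda := \set{x \in X : \exists\, B \subseteq \Bbb N,\ \udens(B) \ge c,\ \lim_{n \in B} T^n x = 0}
$$
and to invoke the Baire category theorem. For $m,N \in \Bbb N$ I would introduce
$$
U_{m,N} := \bigcup_{n \ge N} \set{x \in X : \card\set{1 \le j \le n : \|T^j x\| < 1/m} > (c - 1/m)\,n}.
$$
Each member of this union is open: for fixed $n$, requiring that at least $k_0 := \lfloor (c-1/m)n\rfloor + 1$ of the conditions $\|T^j x\| < 1/m$ ($1\le j\le n$) hold is the union over all $k_0$-element subsets $S \subseteq [1,n]$ of the open sets $\bigcap_{j \in S}\set{x : \|T^j x\| < 1/m}$, hence open; so $U_{m,N}$ is open.

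Next I would verify that each $U_{m,N}$ is dense by showing $X_0 \subseteq U_{m,N}$. Fix $x_0 \in X_0$ and $m$. Since $\lim_{n \in A} T^n x_0 = 0$, there is $J$ with $\|T^j x_0\| < 1/m$ for all $j \in A$ with $j \ge J$; and since $\udens(A) = c$ there are arbitrarily large $n$ with $\card(A \cap [1,n]) > (c - \tfrac{1}{2m})n$. For such $n > 2mJ$ one gets $\card\set{1 \le j \le n : \|T^j x_0\| < 1/m} \ge \card(A\cap[1,n]) - J > (c-1/m)n$, so $x_0 \in U_{m,N}$ for every $N$. Density of $X_0$ then forces $U_{m,N}$ to be dense, and $G := \bigcap_{m,N} U_{m,N}$ is a dense $G_\delta$, hence residual.

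It remains to prove $G \subseteq \Lambda$, which I regard as the main point. Fix $x \in G$ and abbreviate $A_\delta := \set{j \in \Bbb N : \|T^j x\| < \delta}$. Applying $x \in U_{k,\,n_{k-1}+1}$ successively (with $n_0 := 0$), I would select $n_1 < n_2 < \cdots$ with $\card(A_{1/k} \cap [1,n_k]) > (c - 1/k)n_k$ for each $k$, and put $B := \bigcup_k \big(A_{1/k} \cap (n_{k-1}, n_k]\big)$. For $j \in B$ lying in $(n_{k-1},n_k]$ we have $\|T^j x\| < 1/k$, and $k \to \infty$ as $j \to \infty$, so $\lim_{n \in B} T^n x = 0$. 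The density estimate is the crux: because $A_{1/i} \supseteq A_{1/k}$ whenever $i \le k$, every block satisfies $\card(A_{1/i}\cap(n_{i-1},n_i]) \ge \card(A_{1/k}\cap(n_{i-1},n_i])$, and summing over $i = 1,\dots,k$ repartitions $[1,n_k]$ to yield $\card(B \cap [1,n_k]) \ge \card(A_{1/k}\cap[1,n_k]) > (c-1/k)n_k$. Hence $\udens(B) \ge \limsup_k (c - 1/k) = c$, so $x \in \Lambda$.

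The delicate step is exactly this reconstruction of a single set $B$ along which the orbit tends to $0$ with $\udens(B) \ge c$: the information carried by $G$ is available only threshold by threshold, and with the slightly weakened constant $c - 1/m$, so the thresholds must be interleaved in blocks while the density is kept at least $c$ in the limit. The monotonicity $A_{1/i} \supseteq A_{1/k}$ is precisely what lets the block union inherit the density of the sharpest threshold on each initial segment $[1,n_k]$, and letting $k \to \infty$ restores the full constant $c$.
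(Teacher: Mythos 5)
Your proof is correct and follows essentially the same route as the paper's: a Baire category argument built on the open, dense sets $U_{m,N}$, which are exactly the paper's $M_{k,m}$ (whose index $m$ is evidently intended as the lower bound $n \ge m$ that you make explicit), each dense because it contains $X_0$. The only difference is completeness: the paper simply asserts that vectors in the residual intersection admit a single set $B$ with $\udens(B) \ge c$ along which the orbit tends to zero, whereas you supply the block-interleaving construction of $B$ and the monotonicity argument that justifies this assertion.
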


\begin{proof}
For $k,m\in\Bbb N $,  let
$$
 M_{k,m}=\left\{x\in X : \hbox{ there exists } n\in\Bbb N \hbox{ with }\card\{j\le n: \ \|T^jx\|<k^{-1}\}\ge n(c-k^{-1})\right\}.
 $$
Clearly $M_k$ is open and dense (since $M_k\supset X_0$). So the set
$X_1=\bigcap_{k,m} M_k$ is residual and consists of vectors with orbits  near of 0 in a set B  with  $\udens(B) \ge c>0$.
\end{proof}

\begin{thm}
\begin{itemize}
\item[(i)] $T$ is reiteratively distributional chaotic of type 1 if and only if  there exists an irregular vector $x$
and a subset $A$ of $\Bbb N$  with  $\udens(A) >0$ such that $\lim_{n \in A} T^nx = 0$.
\item[(ii)] $T$ is reiteratively distributional chaotic of type $1^+$ if and only if there exists an irregular vector $x$ with orbit distributionally near to zero.
\item[(iii)] $T$ is reiteratively distributional chaotic of type $2$ if and only if there exists an irregular vector $x$ with orbit distributionally unbounded.
\end{itemize}
\end{thm}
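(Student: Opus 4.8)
The plan is to pass from pairs to single vectors by linearity and to translate each form of reiterative distributional chaos into an orbit–density condition. For a pair $(x,y)$ write $z=x-y$, so that $T^jx-T^jy=T^jz$, and abbreviate $S_\delta=\{j\in\Bbb N:\|T^jz\|<\delta\}$, with complement $S_\delta^c=\{j:\|T^jz\|\ge\delta\}$. Using $\bldens(S)=1-\budens(S^c)$ and $\ldens(S)=1-\udens(S^c)$, the defining conditions become: type $1$ says $\budens(S_\eps^c)=1$ and $\udens(S_\delta)\ge c$ for small $\delta$; type $1^+$ says $\budens(S_\eps^c)=1$ and $\udens(S_\delta)=1$ for all $\delta$; type $2$ says $\budens(S_\delta)=1$ for all $\delta$ and $\udens(S_\eps^c)>0$.

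For the backward implication in all three cases I would use scaling together with Proposition~\ref{zajeb}. Given an irregular vector $x_0$ with the stated density property, the argument proving Proposition~\ref{zajeb}, applied to $x_0$, yields $\budens(\{j:\|T^jx_0\|\ge M\})=1$ for every $M>0$; this property, and the near-$0$/unboundedness hypothesis, are invariant under passing to a nonzero multiple and to a difference $(\lambda-\mu)x_0$. Hence I would take $\Gamma=\{\lambda x_0:0<\lambda<1\}$, which is uncountable with every nonzero difference a multiple of $x_0$: the Banach-density-one superlevel sets give the $BF_{x,y}(\eps)=0$ requirement of types $1,1^+$ with the uniform choice $\eps=1$, while for type~$2$ the upper-density-one superlevel sets coming from distributional unboundedness give $F_{x,y}(1)=0<1$; the $F^*$/$BF^*$ requirements come directly from the near-$0$ hypothesis. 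This direction is short.

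The forward implication is the substantive one. Fix a chaotic pair and put $z=x-y\neq0$; the dictionary shows $\liminf_n\|T^nz\|=0$ and $\limsup_n\|T^nz\|\ge\eps$, so $z$ is semi-irregular and $T$ is Li--Yorke chaotic. I would then work inside $Z=\ov{\spa}\{T^jz:j\ge 0\}$, on which $\spa\{T^jz:j\ge0\}$ is dense and, by continuity of the $T^j$, a single density set working for $z$ works simultaneously for all of $\spa\{T^jz\}$. For types $1,1^+$ a diagonal argument produces $A$ with $\lim_{n\in A}T^nz=0$ and $\udens(A)\ge c$ (resp.\ $=1$), so Lemma~\ref{lemma4} makes the vectors in $Z$ with orbit near $0$ along a set of upper density $\ge c$ (resp.\ distributionally near $0$) residual in $Z$; meanwhile $\liminf_n\|T^nz\|=0$ with $\limsup_n\|T^nz\|\ge\eps$ forces $\sup_n\|T^n|_Z\|=\infty$, so by a Banach--Steinhaus/Baire argument $\{x\in Z:\sup_n\|T^nx\|=\infty\}$ is also residual in $Z$. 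Intersecting the two residual sets yields a vector that is irregular and retains the density property. For type~$2$ the distributional unboundedness is supplied by Lemma~\ref{lemma3} and \cite[Proposition 8]{2013JFA}: taking the vectors $y_k$ there from the orbit of $z$ keeps the construction inside $Z$ and produces a residual set of distributionally unbounded vectors, which I would intersect with the residual set $\{x\in Z:\liminf_n\|T^nx\|=0\}$ (residual because $\spa\{T^jz\}$ consists of vectors whose orbit tends to $0$ along the common Banach-density-one set of $z$).

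The main obstacle is exactly this synthesis: manufacturing one vector that is simultaneously irregular (hence with $\limsup=\infty$) and carries the finer distributional density property. The delicate point is running both genericity arguments in the same Baire space, i.e.\ ensuring the unbounded side is available on the invariant subspace $Z$ where the near-$0$ side is generic. For types $1,1^+$ this is handled by the elementary observation that $\liminf\|T^nz\|=0$ together with $\limsup\|T^nz\|\ge\eps$ already force $\sup_n\|T^n|_Z\|=\infty$, and for type~$2$ by keeping the \cite[Proposition 8]{2013JFA} construction of a distributionally unbounded orbit inside $Z$ through Lemma~\ref{lemma3}.
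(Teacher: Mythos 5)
Your proposal is correct and follows essentially the same route as the paper: reduce the chaotic pair to the single vector $z=x-y$, pass to the closed $T$-invariant subspace $\ov{\spa}(\orb(z,T))$, establish residuality of the near-zero property (via Lemma~\ref{lemma4} for types $1,1^+$, and via a dense-orbit Baire argument for type $2$) and of the unbounded/distributionally unbounded property (via Banach--Steinhaus for types $1,1^+$, and via Lemma~\ref{lemma3} with \cite[Propositions 7--8]{2013JFA} for type $2$), intersect the two residual sets, and handle the converse by scaling an irregular vector together with the construction in Proposition~\ref{zajeb}. The only difference is presentational: where the paper cites \cite[Corollaries 4 and 5]{band}, you reprove those facts directly, and you make explicit the diagonal argument and continuity step needed to feed Lemma~\ref{lemma4}, which the paper leaves implicit.
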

\begin{proof}

(i): It is clear (using ideas of the proof of Proposition \ref{zajeb}) that if there exists an irregular vector $x$ and a subset $A$ of $\Bbb N$  with  $\udens(A)>0$ such that $\lim_{n \in A} T^nx = 0$, then $T$ is reiteratively distributional chaotic of type $1$.

Suppose that $T$ is reiteratively distributional chaotic of type $1$, then there exists a pair $(x,y)$ of distinct points with $BF_{x,y}(\eps) = 0$
for some $\eps > 0$ and there exist $c > 0$ and $r > 0$ such that $c \le F^*_{x,y}(\delta)$ for all $ 0 < \delta < r$.
Let $u=x-y$ and consider
$$
Y_1 = \overline{span}(Orb(u,T)),
$$
which is an infinite dimensional closed $T$-invariant subspace of $X$. Consider the operator $S\in B(Y_1)$ obtained by restricting $T$ to $Y_1$.

Then  $S$ is  reiteratively distributional chaotic of type $1$ in $Y_1$.

Thus, by \cite[Corollary 5]{band}, $S$ has a residual set of points on $Y_1$ with orbit  unbounded. Moreover, by Lemma \ref{lemma4}, $S$ has a residual set of points on $Y_1$ with orbit  near to zero in a set with positive upper density.

Hence, $S$ has  an irregular vector $x$ with orbit  near of zero in a set with positive upper density on $Y_1$. Therefore, $T$ has  an irregular vector $x$ with orbit distributionally near of zero with positive upper density  on $X$.

(ii):  The proof is analogous to that of (i).

(iii): It is clear that if there exists an irregular vector $x$ with orbit distributionally unbounded, then $T$ is reiteratively distributional chaotic of type 2.

Suppose that $T$ is reiteratively distributional chaotic of type 2, then there exists a pair $(x,y)$ of distinct points with  $BF^*_{x,y}\equiv 1$  and $F_{x,y}(\eps) < 1$
              for some $\eps > 0$.
Let $u=x-y$ and consider $Y_2 = \overline{span}(Orb(u,T))$ and  the operator $S\in B(Y_2)$ obtained by restricting $T$ to $Y_2$.
Then  $S$ is  reiteratively distributional chaotic of type 2 in $Y_2$.

Thus, by Lemma \ref{lemma3} and \cite [Proposition 7]{2013JFA}, $S$ has a residual set of points on $Y_2$ with orbit distributionally unbounded. Moreover, by \cite[Corollary4]{band}, $S$ has a residual set of points on $Y_2$ with orbit near to zero.

Hence $S$ has  an irregular vector $x$ with orbit distributionally unbounded on $Y_2$. Therefore $T$ has  an irregular vector $x$ with orbit distributionally unbounded on $X$.
\end{proof}

Is clear from the definition that distributional chaos implies reiteratively distributional chaotic of type $1^+$ and type 2. Moreover

\begin{thm}
Any mean Li-Yorke chaotic operator is  reiteratively distributional chaotic of type $1^+$.
\end{thm}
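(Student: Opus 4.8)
The plan is to reduce the statement to part (ii) of the preceding theorem, which characterizes reiterative distributional chaos of type $1^+$ as the existence of an irregular vector whose orbit is distributionally near to zero. So it suffices to manufacture such a vector out of the hypothesis of mean Li-Yorke chaos.

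First I would invoke the result of \cite{ARXIV} that mean Li-Yorke chaos is equivalent to the existence of an absolutely mean irregular vector $x$, that is,
$$
\liminf_{N \to \infty} \frac{1}{N} \sum_{j=1}^N \|T^j x\| = 0
\quad\text{and}\quad
\limsup_{N \to \infty} \frac{1}{N} \sum_{j=1}^N \|T^j x\| = \infty .
$$
I claim this same $x$ is irregular with orbit distributionally near to zero. The unbounded half is immediate: if the orbit $(T^n x)_n$ were bounded, say by $M$, then every Cesàro average would be $\le M$, contradicting that their limsup is $\infty$; hence $\limsup_n \|T^n x\| = \infty$.

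For the remaining half I would fix a sequence $N_k \to \infty$ realizing the liminf, so that $\theta_k := \frac{1}{N_k}\sum_{j=1}^{N_k}\|T^j x\| \to 0$, and thin it out so that $N_{k-1}/N_k \to 0$. Markov's inequality on $[1,N_k]$ with the shrinking threshold $\sqrt{\theta_k}$ yields
$$
\card\set{1 \le j \le N_k : \|T^j x\| \ge \sqrt{\theta_k}} \le N_k \sqrt{\theta_k},
$$
so the complementary set $G_k := \set{1 \le j \le N_k : \|T^j x\| < \sqrt{\theta_k}}$ fills all but a vanishing fraction of $[1,N_k]$. Taking $A := \bigcup_k \bigl(G_k \cap (N_{k-1},N_k]\bigr)$, one checks that $\udens(A) = 1$, since $|A \cap [1,N_k]|/N_k \ge 1 - \sqrt{\theta_k} - N_{k-1}/N_k \To 1$, while $\|T^n x\| < \sqrt{\theta_k} \To 0$ for $n$ in the $k$-th block, so $T^n x \to 0$ along $A$. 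This is exactly the assertion that the orbit of $x$ is distributionally near to zero, and in particular $\liminf_n \|T^n x\| = 0$, so $x$ is irregular. Applying part (ii) of the preceding theorem then gives that $T$ is reiteratively distributional chaotic of type $1^+$.

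I expect the only point requiring genuine care to be this diagonal step: Markov's inequality by itself produces, for each threshold $\eps$, a set of upper density one on which $\|T^j x\| < \eps$, and the work is to glue these together into a single density-one set on which the orbit actually converges to zero. Everything else reads off directly from the definitions together with the cited characterizations, so no further obstacle is anticipated.
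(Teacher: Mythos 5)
Your proof is correct, and its skeleton coincides with the paper's: both obtain an absolutely mean irregular vector from the result of \cite{ARXIV} and then conclude via part (ii) of the preceding characterization theorem. The difference lies in the middle step. The paper disposes of it with a single citation --- by \cite[Proposition 20 (a)]{2018JMAA}, every absolutely mean irregular vector is irregular with orbit distributionally near to zero --- whereas you prove this implication from scratch: boundedness of the orbit would bound the Ces\`aro means (giving the unbounded half), and a Markov-inequality estimate on each window $[1,N_k]$ with threshold $\sqrt{\theta_k}$, followed by gluing the blocks $(N_{k-1},N_k]$ after thinning so that $N_{k-1}/N_k \to 0$, produces a single set $A$ with $\udens(A)=1$ along which $T^n x \to 0$. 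The gluing is indeed the only delicate point (Markov alone gives, for each fixed $\eps$, a set of upper density one, not convergence to zero along one set), and you handle it correctly; in effect you have reproved the relevant half of the cited Proposition 20 (a). What the paper's route buys is brevity; what yours buys is self-containedness, at the cost of redoing a known lemma. One microscopic point worth adding: the Markov step tacitly needs $\theta_k>0$, but if some $\theta_k=0$ then the orbit is eventually zero, contradicting the unboundedness you had already established, so this degenerate case cannot occur.
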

\begin{proof}
Any mean Li-Yorke chaotic operator has an absolutely mean irregular vector. As a consequence of \cite[Proposition 20 (a)]{2018JMAA}, this is an irregular vector with orbit distributionally near of zero. Thus $T$ is  reiteratively distributional chaotic of type $1^+$.
\end{proof}

\begin{thm}
Any \emph{DC}$2\frac{1}{2}$ chaotic operator is  reiteratively distributional chaotic of type $1$ and of type $2$.
\end{thm}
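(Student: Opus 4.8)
The plan is to extract a single difference vector from the \emph{DC}$2\frac12$ hypothesis, pass to the restriction of $T$ to the closed subspace spanned by its orbit, and then run the same residuality machinery that underlies parts (i) and (iii) of the characterization theorem established above, together with Lemmas \ref{lemma3} and \ref{lemma4}. First I would fix a \emph{DC}$2\frac12$ pair $(x,y)$ for $T$, put $u:=x-y\neq 0$, and set $Y:=\overline{\spa}(\orb(u,T))$ (an infinite-dimensional closed $T$-invariant subspace) with $S:=T|_Y\in B(Y)$. Writing $D_\delta:=\{j\in\Bbb N:\|S^ju\|<\delta\}$ and using $\|S^ju\|=\|T^jx-T^jy\|$, the defining inequality $F_{x,y}(\delta)<c<F^*_{x,y}(\delta)$ (valid for $0<\delta<r$, and forcing $c<1$ since $F^*_{x,y}(\delta)\le1$) becomes two usable facts: (a) $\udens(D_\delta)>c>0$ for every $\delta\in(0,r)$, so in particular $\liminf_j\|S^ju\|=0$; and (b) $\udens(\Bbb N\setminus D_\delta)=1-\ldens(D_\delta)>1-c>0$ for every $\delta\in(0,r)$.

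For the type $2$ conclusion I would argue exactly as in the proof of Lemma \ref{lemma3}. By (b), fixing any $\zeta\in(0,r)$ gives $\xi:=\udens(\{j:\|S^ju\|>\zeta\})>0$, while (a) supplies an increasing sequence $(n_k)$ with $\|S^{n_k}u\|\to 0$; these are precisely the data fed into \cite[Proposition 8]{2013JFA}, so $S$ admits a distributionally unbounded orbit, and then \cite[Proposition 7]{2013JFA} makes the set of such vectors residual in $Y$. Since by (a) the vector $u$ already satisfies $\liminf_j\|S^ju\|=0$, \cite[Corollary 4]{band} gives a residual set of vectors in $Y$ whose orbit returns to $0$ (i.e.\ with $\liminf_j\|S^j\cdot\|=0$). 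Intersecting these two residual sets in the (complete, hence Baire) space $Y$ produces a vector $z\in Y$ that is irregular and has a distributionally unbounded orbit; as $S^nz=T^nz$, part (iii) of the characterization theorem yields that $T$ is reiteratively distributional chaotic of type $2$.

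For the type $1$ conclusion I would first observe that the distributionally unbounded orbit found above is in particular unbounded, so $S$ is not power-bounded and, by \cite[Corollary 5]{band}, the vectors in $Y$ with unbounded orbit form a residual set. On the other hand, from (a) a diagonal extraction over a sequence $\delta_m\downarrow 0$ produces a single set $A$ with $\udens(A)\ge c>0$ and $\lim_{n\in A}S^nu=0$; by continuity of the powers of $S$, $\lim_{n\in A}S^nv=0$ for every $v$ in the dense subspace $\spa(\orb(u,T))$ of $Y$, so Lemma \ref{lemma4} yields a residual set of vectors in $Y$ whose orbit tends to $0$ along a set of upper density at least $c$. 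Intersecting again gives an irregular vector $z'\in Y$ together with a set $A'$ with $\udens(A')>0$ along which $S^nz'\to 0$, and part (i) of the characterization theorem then gives that $T$ is reiteratively distributional chaotic of type $1$.

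The only genuinely non-formal step is the diagonal construction of $A$ from the family $\{D_{\delta_m}\}$: one chooses witnessing lengths $N_m$ for the densities $\udens(D_{\delta_m})>c$ with $N_{m-1}/N_m\to 0$ and sets $A:=\bigcup_m\bigl(D_{\delta_m}\cap(N_{m-1},N_m]\bigr)$, so that $\|S^nu\|\to 0$ as $n\to\infty$ through $A$ while the bound $\udens(A)\ge c$ survives. I expect this construction, together with the bookkeeping needed to verify that the hypotheses of \cite[Corollaries 4 and 5]{band} and \cite[Propositions 7 and 8]{2013JFA} are met on $S$, to be the main (though essentially routine) obstacle; everything else parallels the proof of the characterization theorem and the crucial input—that (a) and (b) reproduce the density data used there—is immediate from the \emph{DC}$2\frac12$ inequalities.
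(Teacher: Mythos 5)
Your proposal is correct and follows essentially the same route as the paper's proof: pass to $S=T|_{Y}$ on $Y=\overline{\spa}(\orb(u,T))$ for $u=x-y$, produce a residual set of vectors with distributionally unbounded orbit (via \cite[Proposition 7]{2013JFA}) and a residual set of vectors whose orbits approach zero along a set of positive upper density (via Lemma \ref{lemma4}), intersect, and conclude through the characterization theorem. The only deviations are cosmetic: the paper extracts a single irregular vector witnessing both types at once and cites \cite[Lemma 7]{2018JMAA} where you rerun the argument of Lemma \ref{lemma3} from \cite[Proposition 8]{2013JFA}, and your explicit diagonal construction of the set $A$ (plus the continuity argument extending $\lim_{n\in A}S^{n}u=0$ to the dense span) supplies a step the paper leaves implicit when invoking Lemma \ref{lemma4}.
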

\begin{proof}
It is sufficient to prove that any DC2$\frac{1}{2}$ chaotic operator has an irregular vector $x$ with orbit distributionally unbounded and  orbit  near to zero in a set with positive upper density.

If $T$  is {DC2$\frac{1}{2}$} chaotic, then  there exist a pair $(x,y)$ and $c > 0$ and $r > 0$ such that
$F_{x,y}(\delta) < c < F^*_{x,y}(\delta)$ for all $ 0 < \delta < r$.

Let $u=x-y$ and consider $Y_3 = \overline{span}(Orb(u,T))$
and the operator $S\in B(Y_3)$ obtained by restricting $T$ to $Y_3$.
Then  $S$ is  {DC2$\frac{1}{2}$} chaotic in $Y_3$.

Thus, by \cite[Lemma 7]{2018JMAA} and \cite [Proposition 7]{2013JFA}, $S$ has a residual set of points on $Y_3$ with orbit distributionally unbounded. Moreover,, by Lemma \ref{lemma4}, $S$ has a residual set of points on $Y_3$ with orbit  near to zero in a set with positive upper density.

Hence, $S$ has  an irregular vector $x$ with orbit distributionally unbounded and  orbit  near to zero in a set with positive upper density  on $Y_3$ . Therefore $T$ has an irregular vector $x$ with   the same property in $X$.
\end{proof}

\begin{cor}
Any upper-frequently hypercyclic  is
reiteratively distributional chaotic of type  1 and of type $2$.
\end{cor}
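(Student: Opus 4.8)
The plan is to read the corollary as the composition of two facts and to reduce it to the theorem immediately preceding it, which states that every DC$2\frac{1}{2}$ chaotic operator is reiteratively distributional chaotic of type $1$ and of type $2$. Thus the only thing that really has to be proved is the single implication that every upper-frequently hypercyclic operator is DC$2\frac{1}{2}$ chaotic; once this is granted the corollary is immediate.

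I would begin from an upper-frequently hypercyclic vector $x$. For every $\delta>0$ the ball $B(0,\delta)$ is open and nonempty, so by definition $F^{*}_{x,0}(\delta)=\udens(\{n:\|T^{n}x\|<\delta\})>0$, and since $x$ is in particular hypercyclic its orbit is dense, hence unbounded; consequently $\liminf_{n}\|T^{n}x\|=0$ and $\limsup_{n}\|T^{n}x\|=\infty$, so $x$ is already an irregular vector. Moreover, taking an open set of the form $\{z:\|z\|>\zeta\}$ shows that $\udens(\{n:\|T^{n}x\|>\zeta\})>0$ for a suitable $\zeta>0$, while visiting the balls $B(0,1/k)$ gives an increasing sequence $(n_{k})$ with $\|T^{n_{k}}x\|<1/k$. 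These are exactly the seed hypotheses used in the proof of Lemma \ref{lemma3}. Passing to the separable invariant subspace $Y=\overline{\spa}(\orb(x,T))$ and the restriction $S=T|_{Y}$ (which inherits upper-frequent hypercyclicity, since the open sets of $Y$ are traces of open sets of $X$ and the orbit lies in $Y$), the argument of Lemma \ref{lemma3} together with \cite[Proposition 7]{2013JFA} produces a residual set of vectors in $Y$ with distributionally unbounded orbit, and \cite[Corollary 4]{band} a residual set with orbit near to zero; any vector in the intersection is irregular with distributionally unbounded orbit, so type $2$ already follows from part (iii) of the preceding theorem without invoking DC$2\frac{1}{2}$.

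The delicate half is type $1$, and this is where I expect the main obstacle to lie. By part (i) of the preceding theorem it is enough to find an irregular vector together with a single set $A$ of positive upper density along which $T^{n}x\to 0$, and the natural tool is Lemma \ref{lemma4}. However, applying that lemma requires a dense set and, crucially, a common constant $c>0$ with $\udens(\{n:\|T^{n}x\|<\delta\})\ge c$ holding simultaneously for all small $\delta$. Upper-frequent hypercyclicity only furnishes, for each scale $\delta$ separately, a positive upper density that may a priori decay to $0$ as $\delta\to 0$, so the uniformity in $\delta$ is not available for free. Supplying this uniform constant is precisely the step of extracting a genuine DC$2\frac{1}{2}$ pair from an upper-frequently hypercyclic vector; once such a pair is in hand, the diagonalisation encoded in Lemma \ref{lemma4} yields the required set $A$ and the preceding theorem closes both cases at once. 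I would therefore concentrate the real work on this extraction, which is the known implication that upper-frequent hypercyclicity implies DC$2\frac{1}{2}$ chaos.
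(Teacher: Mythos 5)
Your proposal is correct and is essentially the paper's own argument: the published proof consists precisely of the reduction you describe, citing \cite[Theorem 13]{2018JMAA} for the fact that every upper-frequently hypercyclic operator is DC$2\frac{1}{2}$ chaotic, and then invoking the preceding theorem. The uniformity in $\delta$ that you correctly identify as the real content of the implication is exactly what that cited theorem supplies, so your reduction (your direct argument for type $2$ being an unnecessary but harmless bonus) completes the proof in the same way the paper does.
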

\begin{proof}
Any upper-frequently hypercyclic is a DC2$\frac{1}{2}$ chaotic operator \cite[Theorem 13]{2018JMAA}.
\end{proof}

\begin{example}\label{zjabb}
In \cite[Section 4]{BayaImre}, F. Bayart and I. Z. Ruzsa have constructed an invertible frequently hypercyclic operator $T=B_{{\bf w}}$ on the space $X:=c_{0}({\mathbb Z})$ that is not distributionally chaotic. This operator is clearly hypercyclic and therefore densely Li-Yorke chaotic, as well . Furthermore, the operator $B_{{\bf w}}$ has the property that there exists a set $A\subseteq {\mathbb N}$ such that $\underline{d}(A)>0,$ and for each $x=\langle x_{n}\rangle_{n\in {\mathbb N}} \in c_{0}({\mathbb Z})$ with $x_{k}\neq 0$ for some $k\in {\mathbb N},$ we have $\|B_{{\bf w}}^{n-k}x\|\geq |x_{k}|>0,$ whenever $n-k\in A.$ Thus $T$ is a frequently hypercyclic
(thus reiteratively distributional chaotic of type 1 and 2) that it is not  reiterative distributional chaotic of type $1^+$.
\end{example}

Given a hypercyclic operator $T$, in \cite{GM} is introduced the following quantity
$$
c(T) = sup_{x \in HC(T)}\udens \{ n\in \Bbb N: T^nx\in B(0, a)\}
$$
This quantity $c(T)\in  [0, 1]$ is associated to any
hypercyclic operator T, and essentially represents the maximal frequency with which the
orbit of a hypercyclic vector for $T$ can visit a ball in $X$ centered at 0.

It is shown in \cite{GM} that, in fact, for any $a$ one has $\udens \{ n\in \Bbb N: T^nx\in B(0, a)\}= c(T)$
 for a comeager set of hypercyclic vectors.

 \begin{thm}
Any  hypercyclic operator with $c(T)>0$ is
reiteratively distributional chaotic of type 1.
\end{thm}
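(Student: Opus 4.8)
The plan is to reduce everything to the characterization in part (i) of the Theorem characterizing reiterative distributional chaos: it suffices to produce an irregular vector $x$ for $T$ together with a set $A\subseteq\Bbb N$ with $\udens(A)>0$ and $\lim_{n\in A}T^nx=0$.

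First I would fix a good hypercyclic vector. For each $k\in\Bbb N$, applying the result of \cite{GM} with $a=1/k$ furnishes a comeager set $G_k\subseteq HC(T)$ on which $\udens\{n\in\Bbb N:T^nx\in B(0,1/k)\}=c(T)$. Since $T$ is hypercyclic, $HC(T)$ is a dense $G_\delta$, hence a Baire space, so $\bigcap_k G_k$ is comeager in $HC(T)$ and in particular nonempty. I would fix $x\in\bigcap_k G_k$ and set $A_k:=\{n\in\Bbb N:\|T^nx\|<1/k\}$, so that $\udens(A_k)=c(T)>0$ for every $k$ and $A_1\supseteq A_2\supseteq\cdots$.

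The technical core is a diagonal construction extracting a single $A$ from the nested family $(A_k)$. Using $\udens(A_k)=c(T)$, I would choose inductively integers $0=n_0<n_1<n_2<\cdots$ so that for every $k$ one has $\card(A_k\cap[1,n_k])>(c(T)-1/k)\,n_k$ and $n_{k-1}/n_k<1/k$; both requirements can be met simultaneously because the inequality $\card(A_k\cap[1,n])/n>c(T)-1/k$ holds for arbitrarily large $n$. I would then put $A:=\bigcup_{k\ge1}\bigl(A_k\cap(n_{k-1},n_k]\bigr)$. For fixed $j$, any $n\in A$ with $n>n_{j-1}$ lies in a block $(n_{k-1},n_k]$ with $k\ge j$, whence $n\in A_k\subseteq A_j$; thus $A\setminus A_j\subseteq[1,n_{j-1}]$ is finite, and therefore $\lim_{n\in A}T^nx=0$. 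For the density, $\card(A\cap[1,n_k])\ge\card(A_k\cap(n_{k-1},n_k])\ge(c(T)-1/k)\,n_k-n_{k-1}$, so dividing by $n_k$ gives $\card(A\cap[1,n_k])/n_k\ge c(T)-2/k$, and hence $\udens(A)\ge c(T)>0$.

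Finally, since $x$ is hypercyclic its orbit is dense and therefore unbounded, so $\limsup_{n\to\infty}\|T^nx\|=\infty$; and because $A$ is infinite with $\lim_{n\in A}T^nx=0$, we also get $\liminf_{n\to\infty}\|T^nx\|=0$. Thus $x$ is an irregular vector with $\lim_{n\in A}T^nx=0$ and $\udens(A)>0$, and part (i) of the cited Theorem yields that $T$ is reiteratively distributional chaotic of type $1$. I expect the main obstacle to be precisely this diagonal step: one must interleave the sets $A_k$ so as to make $A$ eventually contained in each $A_k$ while preserving positive upper density, and the choice $n_{k-1}/n_k<1/k$ is exactly what keeps the boundary losses negligible so that the full value $c(T)$ survives in the limit.
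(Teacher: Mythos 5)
Your proof is correct, and it ends at the same place as the paper --- both arguments produce an irregular vector whose orbit tends to zero along a set of positive upper density and then invoke part (i) of the characterization theorem --- but the key external input is different. The paper's proof is a three-line Baire intersection: the residual set $C_1$ of vectors with unbounded orbit (available since $T$ is hypercyclic) is intersected with the residual set $C_2$ furnished by \cite[Proposition 4.7]{GM}, which \emph{already} consists of vectors $y$ with $\|T^jy\|\to 0$ along a single set $E_y$ satisfying $\udens(E_y)=c(T)$; any $z\in C_1\cap C_2$ finishes the argument. You, by contrast, only use the weaker fixed-radius statement quoted in the text before the theorem (for each $a>0$, a comeager set of hypercyclic vectors visits $B(0,a)$ with upper density exactly $c(T)$), and you upgrade it by hand: the countable Baire intersection over $a=1/k$ gives one hypercyclic vector $x$ with nested sets $A_k$ of upper density $c(T)$, and your blocking construction $A=\bigcup_k\bigl(A_k\cap(n_{k-1},n_k]\bigr)$, with $\card(A_k\cap[1,n_k])>(c(T)-1/k)n_k$ and $n_{k-1}/n_k<1/k$, correctly yields $\lim_{n\in A}T^nx=0$ together with $\udens(A)\ge c(T)>0$ (the tail containment $A\setminus A_j\subseteq[1,n_{j-1}]$ and the boundary estimate both check out). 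In effect you re-prove the precise consequence of Proposition 4.7 that the paper cites; your source of unboundedness also differs cosmetically, since you use that your chosen vector is itself hypercyclic rather than intersecting with a separate residual set of vectors with unbounded orbit. What the paper's route buys is brevity, at the cost of leaning on the stronger result of \cite{GM}; what yours buys is self-containedness given only the statement actually quoted in the paper, with the diagonal extraction carrying the extra work.
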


\begin{proof}
As $T$ is hypercyclic, there exists a residual set $C_1$ of points with orbit unbounded.

Moreover, as a consequence of \cite[Proposition 4.7]{GM}, there exists a residual set
$C_2$ of vectors $y \in X$ such that $\|T^j y\| \to 0$ as $j \to \infty$
along some set $E_y$ with $\udens(E_y) = c(T)$.

Let $z \in C_1 \cap C_2$.  Then $z$ is  an irregular vector satisfying
 $\lim_{n \in A} T^nx = 0$ in a subset $A$ of $\Bbb N$  with  $\udens(A) >0$, thus
 $T$  is reiteratively distributional chaotic of type 1.
\end{proof}

\begin{figure}[h]
\begin{tikzpicture}[scale=0.22,>=stealth]
 \node[right] at (18,21) {FHC};
  \node[right] at (-3,15) {mixing};
 \node[right] at (5,15) {Devaney chaotic};
  \node[right] at (29,15) {FH};
  \node[right] at (33,15) {DC1$\equiv$ DC2};
  \node[right] at (45,15) {mean Li-Yorke};
  \node[right] at (28,10) {UFH};
 \node[right] at (19,5) {RH};
\node[right] at (34,5) {DC2$\frac{1}{2}$};
 \node[right] at (28,0) {RDC2};
 \node[right] at (44,5) {RDC$1^+$};
  \node[right] at (40,0) {RDC1};
  \node[right] at (17,0) {w-mixing};
 \node[right] at (19,-5) {H};
  \node[right] at (15,-10) {Li-Yorke chaotic};
 \draw[double, ->] (20,20) -- (0,16);
  \draw[double, ->] (20,20) -- (10,16);
   \draw[double, ->] (20,20) -- (30,16);
  \draw[double, ->] (20,20) -- (37,16);
   \draw[double, ->] (20,20) -- (50,16);
      \draw[double, ->] (50,14) -- (46,6);
       \draw[double, ->] (46,4) -- (43,1);
       \draw[double, ->] (42,-1) -- (20,-9);
       \draw[double, ->] (37,14) -- (46,6);
   \draw[double, ->] (10,14) -- (20,6);
 \draw[double, ->] (0,14) -- (19,1);
  \draw[double, ->] (30,14) -- (30,11);
  \draw[double, ->] (30,9) -- (37,6);
  \draw[double, ->] (30,9) -- (20,6);
   \draw[double, ->] (37,14) -- (37,6);
\draw[double, ->] (20,4) -- (20,1);
\draw[double, ->] (20,-1) -- (20,-4);
\draw[double, ->] (37,4) -- (31,1);
\draw[double, ->] (37,4) -- (43,1);
\draw[double, ->] (30,-1) -- (20,-9);
\draw[double, ->] (20,-6) -- (20,-9);
 \end{tikzpicture}
\caption{Implications between different definitions.} 
\end{figure}
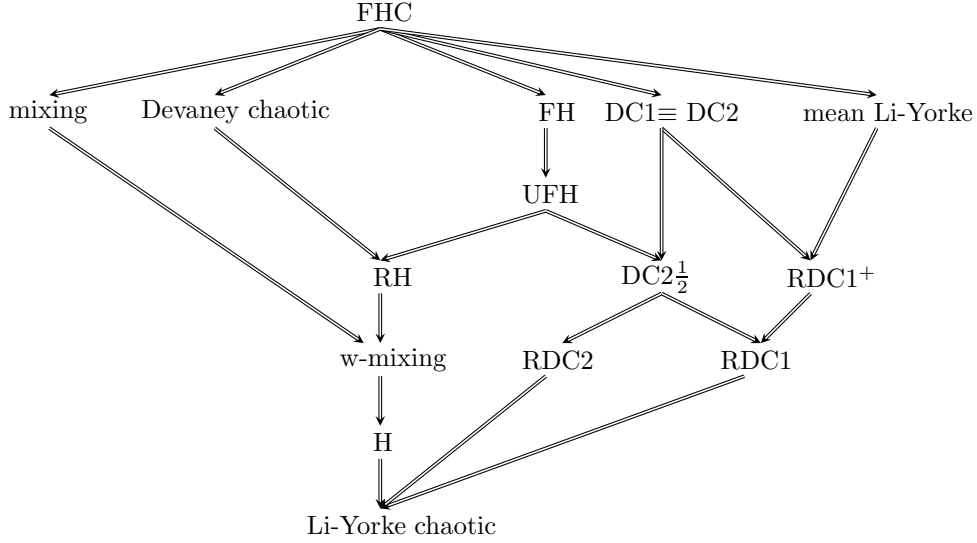

In the continuation, we will see that under the condition that there exists a dense set
$X_0 \subset X$ such that $T^nx \to 0$ for all $x \in X_0$,
$T$ is reiteratively distributionally chaotic of type $2$ is  equivalent to DC1.

\begin{thm}\label{theorem3}
Let $T \in B(X)$ and assume that there exists a dense set $X_0 \subset X$
such that
$$
T^nx \to 0 \ \ \text{ for all } x \in X_0.
$$
Then the following assertions are equivalent:
\begin{itemize}
\item [\rm (i)] $T$ is distributionally chaotic of type $1$;
\item [\rm (ii)]  $T$ is distributionally chaotic of type $2$;
\item [\rm (iii)]   $T$ is distributionally chaotic of type $2\frac{1}{2}$;
\item [\rm (iv)]  $T$ is reiteratively distributionally chaotic of type $2$.
\end{itemize}
\end{thm}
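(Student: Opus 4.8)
The plan is to establish the cycle (i) $\Rightarrow$ (ii) $\Rightarrow$ (iv) $\Rightarrow$ (i) together with (i) $\Rightarrow$ (iii) $\Rightarrow$ (iv), so that (iii) is caught in the same loop and all four statements become equivalent. The guiding observation is that the first four of these arrows hold for an arbitrary $T \in B(X)$ and are essentially formal, while the density hypothesis is needed only for the single substantial implication (iv) $\Rightarrow$ (i).

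First I would dispose of the formal arrows. For (i) $\Rightarrow$ (ii) and (i) $\Rightarrow$ (iii), take a DC1 pair $(x,y)$, so $F^*_{x,y} \equiv 1$ and $F_{x,y}(\eps) = 0$ for some $\eps > 0$; then $F_{x,y}(\eps) = 0 < 1$ already exhibits DC2 with the same scrambled set, and for $0 < \delta < \eps$ one has $F_{x,y}(\delta) = 0 < \tfrac12 < 1 = F^*_{x,y}(\delta)$, which is DC2$\tfrac12$ with $c = \tfrac12$ and $r = \eps$. For (ii) $\Rightarrow$ (iv) I would invoke $\budens \ge \udens$: a DC2 pair satisfies $BF^*_{x,y}(\delta) \ge F^*_{x,y}(\delta) = 1$ for every $\delta$, so $BF^*_{x,y} \equiv 1$, while the condition $F_{x,y}(\eps) < 1$ is unchanged; hence the very same scrambled set witnesses RDC2. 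Finally (iii) $\Rightarrow$ (iv) is precisely the theorem asserting that every DC2$\tfrac12$ operator is reiteratively distributionally chaotic of type $2$, so nothing new is required there.

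The heart of the argument is (iv) $\Rightarrow$ (i). Assuming RDC2, Lemma \ref{lemma3} furnishes a vector with distributionally unbounded orbit, and then \cite[Proposition 7]{2013JFA} upgrades this single witness to a residual set $U \subset X$ of vectors whose orbit is distributionally unbounded. In parallel, the standing hypothesis says exactly that the dense set $X_0$ consists of vectors with $\lim_{n} T^n x = 0$, i.e.\ the case $A = \Bbb N$, $c = 1$ of Lemma \ref{lemma4}; applying that lemma produces a residual set $Z \subset X$ of vectors whose orbit is distributionally near to $0$ (null along a set of upper density $1$). By the Baire category theorem $U \cap Z$ is residual, hence nonempty, and any $x \in U \cap Z$ is a distributionally irregular vector for $T$. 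By the characterization of distributional chaos in \cite{2013JFA}, $T$ is then distributionally chaotic, that is, DC1, which is (i).

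I expect the main obstacle to be precisely this last step: one must manufacture a single vector that is simultaneously distributionally unbounded and distributionally near to $0$, whereas a priori these two features come from different sources — an RDC2 pair on one side and the dense null-orbit set $X_0$ on the other. The density hypothesis enters only here, and its role is to force distributional nearness-to-$0$ to be a comeager property (via Lemma \ref{lemma4} with $c=1$) rather than merely a property of $X_0$; residuality of distributional unboundedness is supplied by \cite[Proposition 7]{2013JFA}. Once both properties are comeager the category intersection is routine, and the passage to DC1 is immediate from the known irregular-vector characterization. Note finally that no restriction to a subspace $\overline{\spa}(\orb(u,T))$ is needed here, unlike in the characterization theorem, because the hypothesis makes both residual sets available globally on $X$.
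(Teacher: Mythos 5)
Your proof is correct and follows essentially the same route as the paper: all the content is in (iv) $\Rightarrow$ (i), which both you and the authors obtain by combining Lemma \ref{lemma3} (an RDC2 pair yields a distributionally unbounded orbit) with the density hypothesis to produce a distributionally irregular vector, whence DC1 by the characterization in \cite{2013JFA}. The only cosmetic difference is that the paper cites the proof of \cite[Theorem 15]{2013JFA} as a black box for this last step, whereas you unpack it explicitly (residuality of distributional unboundedness via \cite[Proposition 7]{2013JFA}, residuality of distributional nearness to zero via Lemma \ref{lemma4} with $c=1$, and a Baire category intersection), which is exactly what that cited proof does.
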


\begin{proof}
It is enough to prove that (iv) $\Rightarrow$ (i). So, assume (iv).
The proof of \cite[Theorem 15]{2013JFA} shows that if an operator $T$
has a distributionally unbounded orbit and $T^nx \to 0$ for every $x$
in a dense subset of $X$, then $T$ has a distributionally irregular vector.
Hence, in view of our hypotheses and Lemma \ref{lemma3}, we conclude that
(i) holds.
\end{proof}

\begin{cor}
Let $X$ be a Banach sequence space in which $(e_n)$ is a basis (\cite{erdper},
Section 4.1). Suppose that the unilateral weighted backward shift
$$
B_w(x_1,x_2,x_3,\ldots):= (w_2x_2,w_3x_3,w_4x_4,\ldots)
$$
is an operator on $X$. Then properties (i)--(iv) in the above theorem are
equivalent for the operator $B_w$.
\end{cor}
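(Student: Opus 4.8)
The plan is to reduce the Corollary to a direct application of Theorem \ref{theorem3}. Since that theorem already establishes the equivalence of (i)--(iv) for \emph{any} operator $T$ admitting a dense set $X_0$ with $T^n x \to 0$ for all $x \in X_0$, the only thing that needs to be checked is that the weighted backward shift $B_w$ satisfies this hypothesis on the space $X$. Everything else is inherited verbatim from the theorem.

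First I would take as candidate dense set the linear span $X_0 := \operatorname{span}\{e_n : n \in \Bbb N\}$ of the basis vectors, that is, the set of finitely supported sequences. Because $(e_n)$ is a basis of $X$, every vector of $X$ is the limit of its sequence of partial sums, so $X_0$ is dense in $X$; this is precisely where the hypothesis that $(e_n)$ is a basis enters. Next I would compute the iterates of $B_w$ on the basis vectors. From the definition $B_w(x_1,x_2,x_3,\ldots) = (w_2 x_2, w_3 x_3, w_4 x_4, \ldots)$ one reads off $B_w e_n = w_n e_{n-1}$ for $n \ge 2$ and $B_w e_1 = 0$; iterating gives $B_w^{\,j} e_n = (w_n w_{n-1}\cdots w_{n-j+1})\, e_{n-j}$ whenever $1 \le j \le n-1$, and $B_w^{\,j} e_n = 0$ as soon as $j \ge n$.

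Consequently, for any $x = \sum_{n=1}^{N} x_n e_n \in X_0$ one has $B_w^{\,j} x = 0$ for all $j \ge N$, so in particular $B_w^{\,j} x \to 0$. This verifies the standing hypothesis of Theorem \ref{theorem3} for $T = B_w$, and the equivalence of (i)--(iv) for $B_w$ follows at once.

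I do not expect a genuine obstacle here: the mathematical content of the Corollary lies entirely in Theorem \ref{theorem3}, and the required hypothesis holds for $B_w$ for the essentially trivial reason that a unilateral backward shift eventually annihilates every finitely supported vector. The only point deserving a word of care is the density of $X_0$ in $X$, which is the standard fact that the finite linear combinations of a Schauder basis form a dense subspace.
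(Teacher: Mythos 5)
Your proposal is correct and is essentially the paper's own proof: the paper's one-line argument is precisely that orbits of finitely supported sequences converge to zero (indeed are eventually zero), so Theorem \ref{theorem3} applies. You have merely spelled out the details (density of the span of the basis, the formula $B_w^{\,j}e_n = 0$ for $j \ge n$) that the paper leaves implicit.
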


\begin{proof}
Since the orbit of any sequence with finite support converges to zero,
we can apply the above theorem.
\end{proof}

By the other hand, we will see that under the condition that there exists a dense set
$X_0 \subset X$ such that $T^nx \to 0$ for all $x \in X_0$, Li-Yorke chaos is equivalent to
$T$ is reiteratively distributionally chaotic of type $1^+$.

\begin{thm}\label{theorem3}
Let $T \in B(X)$ and assume that there exists a dense set $X_0 \subset X$
such that
$$
T^nx \to 0 \ \ \text{ for all } x \in X_0.
$$
Then the following assertions are equivalent:
\begin{itemize}
\item [\rm (i)] $T$ is Li-Yorke  chaotic;
\item [\rm (ii)]$T$ is reiteratively distributionally chaotic of type $1^+$.
\end{itemize}
\end{thm}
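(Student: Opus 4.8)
The plan is to reduce both implications to the characterization of reiterative distributional chaos of type $1^+$ proved earlier (namely: $T$ is reiteratively distributionally chaotic of type $1^+$ if and only if $T$ admits an irregular vector whose orbit is distributionally near to zero), combined with the equivalence, from \cite[Theorem 5]{2011}, between Li-Yorke chaos and the existence of an irregular vector. The direction (ii) $\Rightarrow$ (i) requires no hypothesis on $X_0$: if $T$ is reiteratively distributionally chaotic of type $1^+$, the characterization theorem already produces an irregular vector, and by \cite[Theorem 5]{2011} this forces $T$ to be Li-Yorke chaotic. So the content lies entirely in (i) $\Rightarrow$ (ii).

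For (i) $\Rightarrow$ (ii) I would run a Baire category intersection argument, parallel to the proof given for hypercyclic operators with $c(T)>0$. First, since $T$ is Li-Yorke chaotic, \cite[Theorem 5]{2011} supplies an irregular vector, which in particular has unbounded orbit; hence $\sup_n \|T^n\|=\infty$, and by the uniform boundedness principle (cf.\ \cite[Corollary 5]{band}) the set $C_1:=\{x\in X:\limsup_n\|T^nx\|=\infty\}$ is residual. Second, the hypothesis that $T^nx\to 0$ for every $x$ in the dense set $X_0$ is exactly the hypothesis of Lemma \ref{lemma4} taken with $A=\Bbb N$ and $c=1$; applying that lemma yields a residual set $C_2$ of vectors whose orbit converges to $0$ along some $B$ with $\udens(B)\ge 1$, hence $\udens(B)=1$, so that every vector in $C_2$ has orbit distributionally near to zero.

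Since $X$ is a Banach space, and therefore a Baire space, $C_1\cap C_2$ is residual and in particular nonempty. Any $z\in C_1\cap C_2$ satisfies $\limsup_n\|T^nz\|=\infty$ together with $\liminf_n\|T^nz\|=0$ (the latter because the orbit tends to $0$ along an infinite set), so $z$ is an irregular vector, and its orbit is distributionally near to zero. By part (ii) of the characterization theorem, the existence of such a $z$ shows that $T$ is reiteratively distributionally chaotic of type $1^+$, which closes the equivalence.

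The step I expect to require the most care, though it is mild, is the application of Lemma \ref{lemma4}: one must reinterpret the full-strength convergence $T^nx\to 0$ on $X_0$ as convergence along $A=\Bbb N$ of density $c=1$, and then verify that the resulting estimate $\udens(B)\ge 1$ collapses to the equality $\udens(B)=1$ demanded by the definition of an orbit distributionally near to zero. Once this is in place, the remainder is the routine combination of the residuality of unbounded orbits with the characterization theorem, and no new estimate is needed.
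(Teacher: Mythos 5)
Your proof is correct and takes essentially the same route as the paper: both implications reduce to the irregular-vector characterization of reiterative distributional chaos of type $1^+$, and the implication (i) $\Rightarrow$ (ii) is the same Baire category argument intersecting a residual set of vectors with unbounded orbit with a residual set of vectors whose orbit is distributionally near to zero. The only cosmetic difference is that the paper cites \cite[Proposition 9]{2013JFA} for the second residual set, whereas you re-derive it from Lemma \ref{lemma4} applied with $A=\Bbb N$ and $c=1$, which is an equally valid application.
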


\begin{proof}
It is enough to prove that (i) $\Rightarrow$ (ii). So, assume (i). Then
by  \cite[Proposition 9]{2013JFA}, if there exists a dense set $X_0 \subset X$
such that $T^nx \to 0 \ \ \text{ for all } x \in X_0$, then the set of points with orbit distributionally near to zero is residual. Moreover, $T$ is Li-Yorke chaotis, thus the set of points with unbounded orbit is residual.
Hence there exist a point with orbit unbounded and orbit distributionally near to zero. As a consequence, $T$ is reiteratively distributionally chaotic of type $1^+$.
\end{proof}
\begin{cor}
Let $X$ be a Banach sequence space in which $(e_n)$ is a basis. Suppose that the unilateral weighted backward shift
$$
B_w(x_1,x_2,x_3,\ldots):= (w_2x_2,w_3x_3,w_4x_4,\ldots)
$$
is an operator on $X$. Then they are equivalents:
\begin{itemize}
\item [\rm (i)] $B_w$ is not power bounded;
\item [\rm (ii)]$B_w$ is reiteratively distributionally chaotic of type $1^+$.
\end{itemize}

\end{cor}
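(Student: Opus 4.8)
The plan is to reduce the statement to the theorem immediately preceding this corollary and then to the known characterization of Li--Yorke chaos for weighted shifts. First I would check that $B_w$ satisfies the standing hypothesis of that theorem, namely that there is a dense set $X_0\subset X$ on which $T^nx\to 0$. Since $(e_n)$ is a basis, the set $X_0$ of finitely supported sequences is dense in $X$; and because $B_w$ is a backward shift, $B_w^n e_m=0$ whenever $n\ge m$, so the orbit of every finitely supported vector is eventually $0$ and in particular converges to $0$. Hence the hypothesis holds, and the preceding theorem gives at once that (ii) is equivalent to $B_w$ being Li--Yorke chaotic.

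It then remains to prove that $B_w$ is Li--Yorke chaotic if and only if it is not power bounded, i.e.\ (i). One implication holds for any operator: if $B_w$ is Li--Yorke chaotic then by \cite[Theorem 5]{2011} it admits an irregular vector $x$, so $\limsup_{n\to\infty}\|B_w^nx\|=\infty$, whence $\sup_n\|B_w^n\|\ge \sup_n \|B_w^n x\|/\|x\|=\infty$ and $B_w$ is not power bounded. Equivalently, power boundedness forces every orbit to be bounded, which rules out irregular vectors and hence Li--Yorke chaos.

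For the remaining, genuinely shift-theoretic implication (not power bounded $\Rightarrow$ Li--Yorke chaotic) I would invoke the dichotomy for unilateral weighted backward shifts established in \cite{band}: such an operator is either power bounded or Li--Yorke chaotic. The mechanism is that $\sup_n\|B_w^n\|=\infty$ translates, via $\|B_w^ne_m\|=\abs{\prod_{i=m-n+1}^{m}w_i}\,\|e_{m-n}\|$, into the existence of windows of weights with arbitrarily large partial products; an irregular vector is then assembled as a sparse series $\sum_k c_k e_{m_k}$ with rapidly increasing gaps $m_{k+1}-m_k$, exactly in the spirit of the sparse construction used in Proposition \ref{zajeb}. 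The growth of the windowed products forces $\limsup_{n\to\infty}\|B_w^nx\|=\infty$, while the sparsity, together with the eventual annihilation of each single basis block, is arranged to produce times at which $\|B_w^nx\|$ is small, so that $\liminf_{n\to\infty}\|B_w^nx\|=0$.

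I expect the main obstacle to be precisely this last construction: one must realize $\limsup=\infty$ and $\liminf=0$ simultaneously in one vector while controlling the cross terms $B_w^ne_{m_j}$ with $j\neq k$, which are merely shifted downwards (not annihilated) until $n\ge m_j$ and must be kept negligible at the relevant times. This bookkeeping is exactly what \cite{band} carries out for weighted shifts, so the most economical route is to cite that characterization directly; combined with the reduction of the first paragraph, this closes the proof of the equivalence of (i) and (ii).
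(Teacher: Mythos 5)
Your proposal is correct and takes essentially the same route the paper intends: verify the hypothesis of the preceding theorem (finitely supported vectors are dense and their orbits are eventually zero under $B_w$), which reduces (ii) to Li--Yorke chaos, and then identify Li--Yorke chaos with non-power-boundedness for weighted shifts --- one direction via the irregular-vector characterization of \cite{2011}, the other via the criterion/dichotomy for shifts in \cite{band}. The paper leaves exactly these steps to the reader, so your write-up simply makes the intended argument explicit.
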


\begin{figure}[h]
\begin{tikzpicture}[scale=0.3,>=stealth]
  \node[right] at (10,20) {{\bf Remark:} If $\{ x: T^nx \rightarrow 0\}$ is a dense set of $X$, then: };
  \node[right] at (37,0) {DC1$\equiv $ RDC2};
  \node[right] at (25,15) {Devaney chaotic};
  \node[right] at (28,10) {UFH};
  \node[right] at (37,-5) {Mean Li-Yorke};
  \node[right] at (19,5) {RH};
  \node[right] at (17,0) {w-mixing};
 \node[right] at (19,-5) {H};
  \node[right] at (28,-15) {Li-Yorke$\equiv$ RDC$1^+$};
  \draw[double, ->] (31,9) -- (40,1);
  \draw[double, ->] (30,9) -- (20,6);
  \draw[double, ->] (40,-1) -- (40,-4);
   \draw[double, ->] (30,14) -- (20,6);
  \draw[double, ->] (20,4) -- (20,1);
  \draw[double, ->] (30,14) -- (40,1);
\draw[double, ->] (20,-1) -- (20,-4);
\draw[double, ->] (20,-6) --(30,-14);
\draw[double, ->] (40,-6) -- (30,-14);
 \end{tikzpicture}
\caption{Implications between the different definitions
 when  $\{x: T^nx \rightarrow 0\}$ is a dense set of $X$.}
\end{figure}
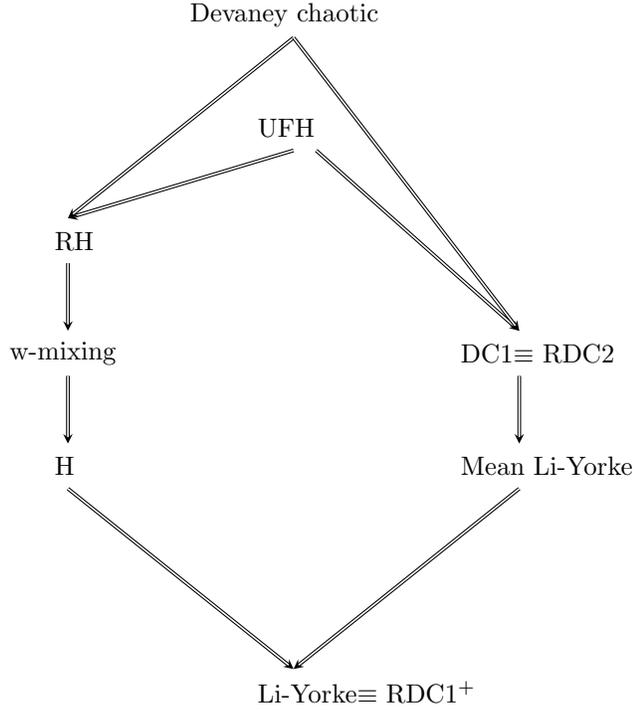

Recall that T. Berm\'udez. et al. (\cite[Theorem 2.1] {cesaro} prove that the operator $T$ in $l_{p}({\mathbb N})$ given by $Te_{1}:=0$ and $Te_{k}:=(k/k-1)^{\alpha}e_{k-1}$ for all $k>1$ with $0<\alpha <\frac{1}{p}$ (where $(e_{k})_{k\in {\mathbb N}}$ is the standard basis of $l_{p}({\mathbb N})$) is absolutely Cesaro bounded (that is, there
exists a constant $C > 0$ such that
$\sup_{N \in \Bbb N} \frac{1}{N} \sum_{j=1}^N \|T^j x\| \leq C \|x\|$, $\forall  x \in X$)
and not power bounded. Moreover since this backward shift  is mixing, we have that $T$ is  Li-Yorke chaotic and since
is absolutely Cesaro bounded, $T$ cannot be distributionally chaotic  \cite[Proposition 20 b)]{2018JMAA}. It is natural to ask whether $T$ is reiteratively distributionally chaotic of type $1$ or $2$?.

\begin{example}\label{problemen}
The operator $T$ defined above is mixing, absolutely Cesaro bounded,  reiteratively distributionally chaotic of type $1^+$ and not reiteratively distributionally chaotic of type $2$.
\end{example}

\begin{proof}

$T$ is reiteratively distributionally chaotic of type $1^+$, because it is clearly Li-Yorke chaotic.

$T$ is not reiteratively distributionally chaotic of type $2$ because is absolutely Cesaro bounded and thus it does not admit an orbit distributionally unbounded.
\end{proof}

\begin{rem}
The operator in the above example is not mean Li-Yorke chaotic, since it is absolutely Cesaro bounded.
\end{rem}

Also, the linear continuous operator $B$ considered by F. Mart\'inez-Gim\'enez, P. Oprocha and A. Peris in \cite[Theorem 2.1]{gimenez} is topologically mixing and not distributionally chaotic (the state space in their analysis is a weighted $l^{p}$-space). Furthermore, for each $x\in X$ and $\epsilon>0$ the  density of set $\{n\in {\mathbb N} : \|B^{n}x\|\leq \epsilon\}$ is equal to one and this immediately implies that the operator $B$ cannot be reiteratively distributionally chaotic of type $2$.

\begin{ques}
Exists there mixing operator, no   reiterative distributional chaotic of type  1  on Banach spaces?
\end{ques}

 The example 6.35 of \cite{GMM} is a mixing and chaotic operator such that  for any hypercyclic vector $x$, there exists an $\varepsilon>0$ such that $\underline{dens}\{n\in {\mathbb N} : \|T^{n}x\|\geq \epsilon\}=1$, thus $T$ does not have a dense set of points with orbit near to zero in a set of upper density $C>0$ ( since then this set should contain a hypercyclic vector  because  is residual ).  Is this operator reiteratively distributional chaotic of type 1 or type $1^+$?

\bigskip

Let us recall that by Q. Menet's result \cite[Theorem 1.2]{menet}, there exists a linear continuous operator $T$ on the Banach space $l^{p}$ ($1\leq p<\infty$) or $c_{0}$ that is chaotic (and therefore reiteratively hypercyclic), not distributionally chaotic and not upper frequently hypercyclic.
This operator has the property that for each $x\in X \setminus \{0\}$ there exists an $\varepsilon>0$ such that $\underline{dens}\{n\in {\mathbb N} : \|T^{n}x\|\geq \epsilon\}>0$, thus $T$ is not reiterative distributional chaotic of type $1^+$.

However, the Menet operator  is reiterative distributional chaotic of type  2. Thus, it is natural to ask:

\begin{ques}
Exists there chaotic operator, no   reiterative distributional chaotic of type  2  on Banach spaces?
\end{ques}

Before proceeding further, we would like to note that H. Bingzhe and L. Lvlin have shown in \cite[Theorem 2.1]{melvin} that there exists an invertible bounded linear operator $T$ on an infinite-dimensional Hilbert space $H$ such that $T$ is both distributionally chaotic  but $T^{-1}$ is not Li-Yorke chaotic. See also \cite[Section 6]{2013JFA}, where it has been constructed a densely distributionally chaotic operator $T\in L(l^{1}({\mathbb Z}))$ such that $T$ is invertible and $T^{-1}$ is not distributionally chaotic; arguing as in the proof given after Example \ref{problemen} below, we can show that $T^{-1}$ is densely reiteratively distributionally chaotic of type $1^+$ and not reiteratively distributionally chaotic of type $2$.

\section{Dense reiteratively distributionally irregular manifold}\label{bruks}

A {\em reiteratively distributionally irregular manifold of type $1^+$}
for $T$ is a vector subspace $Y$ of $X$ such that every non-zero vector
$y$ in $Y$ is irregular for $T$  and distributionally near to zero for $T$.

\begin{thm}\label{na-dobro}
Suppose that $X$ is separable, $T\in B(X)$,
\begin{itemize}
\item[(i)] $X_0:= \Big\{x \in X : \lim _{n \to \infty} \|T^n x\| = 0\Big\}$ is dense in $X$,
and
\item[(ii)] $T$ is not power bounded.
\end{itemize}
Then there exists a dense  reiteratively distributionally irregular manifold of type $1^+$ for $T$.
\end{thm}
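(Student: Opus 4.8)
The plan is to produce the dense manifold from two residual sets, together with a reduction that cleanly separates the (free) density requirement from the genuinely hard lineability construction.

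First I would record two residual sets. By hypothesis (i) and \cite[Proposition 9]{2013JFA} (equivalently Lemma \ref{lemma4} with $c=1$), the set $\mathcal D$ of vectors whose orbit is distributionally near to zero is residual in $X$. By hypothesis (ii), a standard Baire/uniform-boundedness argument shows that $\mathcal U:=\{x:\limsup_n\|T^nx\|=\infty\}$ is residual: each $E_M:=\{x:\sup_n\|T^nx\|\le M\}$ is closed, and if some $E_M$ had nonempty interior then it would contain a ball $B(x_0,r)$ and force $\sup_n\|T^n\|\le 2M/r<\infty$, contradicting (ii). Hence $M:=\mathcal D\cap\mathcal U$, the set of vectors that are simultaneously irregular and distributionally near to zero, is residual (in particular nonempty), which already gives reiterative distributional chaos of type $1^+$.

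The structural observation driving the upgrade to a manifold is that $M$ is a cone ($\lambda M\subseteq M$ for $\lambda\neq 0$), is disjoint from $X_0$, and satisfies $M+X_0\subseteq M$: if $y\in M$ with associated density-one set $A$ and $x\in X_0$, then $T^n(y+x)\to 0$ along $A$ while $\limsup_n\|T^n(y+x)\|=\limsup_n\|T^ny\|=\infty$, because $T^nx\to 0$. I would then fix a dense sequence $(x_k)$ in $X$ and take as generators $y_k:=s_k+\xi_k$, where $(s_k)\subseteq M$ are \emph{seeds} still to be built and $\xi_k\in X_0$ is chosen with $\|\xi_k-(x_k-s_k)\|<2^{-k}$, so that $\|y_k-x_k\|<2^{-k}$ and $Y:=\operatorname{span}\{y_k\}$ is automatically dense. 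For any not-all-zero scalars $(c_k)$ one has $\sum_k c_ky_k=\bigl(\sum_k c_ks_k\bigr)+\bigl(\sum_k c_k\xi_k\bigr)$ with the second summand in $X_0$; by $M+X_0\subseteq M$ this lies in $M$ as soon as $\sum_k c_ks_k\in M$. Thus the whole theorem reduces to constructing seeds $(s_k)\subseteq M$ for which \emph{every} combination with not-all-zero coefficients lies in $M$; since $0\notin M$, this property forces the $s_k$ to be linearly independent, so nonzero elements of $Y$ indeed correspond to not-all-zero coefficient tuples and hence lie in $M$.

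It remains to build an infinite-dimensional subspace inside $M\cup\{0\}$. For the distributionally-near-zero part I would realize each $s_k$ as a norm-convergent series $s_k=\sum_i v_{k,i}$ of vectors drawn from $X_0$ (orbits tending to $0$), keeping the ``spike times'' very sparse so their union has density zero; off this set all orbits $T^ns_k$ can be forced to $0$, giving a common set $A$ of upper density one along which every finite combination tends to $0$. For unboundedness I would use (ii) to insert, at a time $m$ dedicated to the $j$-th seed, a contribution making $\|T^ms_j\|$ as large as desired while keeping $\sum_{j'\neq j}\|T^ms_{j'}\|<1$; letting the spike heights for each fixed seed tend to infinity then forces, for any not-all-zero $(c_k)$ and any $j$ in its support, $\|T^m\sum_k c_ks_k\|\ge |c_j|\cdot(\text{height})-\max_{j'}|c_{j'}|\to\infty$ along the times dedicated to $j$. \textbf{The main obstacle is exactly this simultaneous diagonal induction}: one must interleave the spikes of all seeds and control cross-terms and series tails so that every nonzero combination still spikes to infinity while all combinations stay small off the density-zero spike set. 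This is the technical heart, paralleling the dense distributionally irregular manifold constructions of \cite{2013JFA,band}, with ``distributionally unbounded'' weakened to merely ``unbounded'' because (ii) provides only non-power-boundedness, which is precisely what fixes the conclusion at type $1^+$ rather than full distributional chaos. Alternatively one may invoke a dense-lineability lemma for a residual cone $M$ with $M+X_0\subseteq M$, but verifying it amounts to the same seed construction.
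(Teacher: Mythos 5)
Your first two paragraphs are correct and give a clean reduction. The Baire argument (Lemma \ref{lemma4} with $c=1$ for $\mathcal D$, the closed sets $E_M$ for $\mathcal U$) does produce a residual set $M$ of vectors that are simultaneously irregular and distributionally near to zero, and the algebraic observations $\lambda M\subseteq M$ ($\lambda\neq 0$), $M+X_0\subseteq M$, $0\notin M$, together with the generators $y_k=s_k+\xi_k$ approximating a dense sequence, validly reduce the theorem to producing seeds $(s_k)\subseteq M$ all of whose nontrivial finite linear combinations lie in $M$. This density mechanism is essentially the one the paper itself uses (there the generators are $y_n=w_n+\frac{1}{n}v_n$ with $(w_n)$ dense in $X_0$ and $(v_n)$ the seeds), so up to this point you and the paper agree.

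The gap is that the seed construction, which you explicitly defer (``the main obstacle is exactly this simultaneous diagonal induction''), is not a routine step that can be outsourced: it is the entire content of the theorem. It cannot be quoted from \cite{2013JFA} or \cite{band}: the dense distributionally irregular manifold construction of \cite{2013JFA} needs a distributionally unbounded orbit, which $T$ need not have under (i)--(ii) (see Example \ref{problemen}: an absolutely Ces\`aro bounded, non power bounded backward shift with dense $X_0$ and no distributionally unbounded orbit), while the irregular-manifold constructions of \cite{band} do not yield the distributionally-near-to-zero property; the hybrid statement needed here must be proved from scratch. The paper does exactly this by an induction your sketch does not carry out: choose normalized $x_m\in X_0$ and times $N_m$ with $\|T^{N_m}x_m\|>m(2C)^m$, $\|T^{N_m}x_k\|<\frac{1}{m}$ and $\frac{1}{N_m}\sum_{i=1}^{N_m}\|T^ix_k\|<\frac{1}{m}$ for $k<m$ (possible since orbits from $X_0$ and their Ces\`aro means tend to $0$ while $\sup_n\|T^n\|=\infty$), and take seeds of the form $\sum_j (2C)^{-r_j}x_{r_j}$ supported on a sparse sequence with $r_{j+1}\ge 1+r_j+N_{r_j+1}$. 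The geometric weights $(2C)^{-r_j}$, the bound $\|T^i\|\le C^i$, and the sparseness of $(r_j)$ are precisely what control the ``series tails'' and ``cross-terms'' you mention: they force the not-yet-relevant terms to contribute at most $\sum_{j>k}2^{-r_j}$ at every time $i\le N_{r_k+1}$, so the spike at time $N_{r_k}$ survives (giving unboundedness) and the Ces\`aro bound at $N_{r_k+1}$ (giving distributional nearness to zero) holds for every nontrivial combination, whose coefficient pattern repeats each nonzero value infinitely often. Your proposal names these difficulties but supplies no mechanism or estimates resolving them --- in particular nothing guarantees your requirement $\sum_{j'\neq j}\|T^ms_{j'}\|<1$ when the $s_{j'}$ are infinite series whose later terms are not yet chosen at the moment $m$ is fixed. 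As it stands, the proposal is a correct reduction plus a plan, not a proof.
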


\begin{proof}
Under hypothesis of theorem,  $C:= \|T\| > 1$ and we can choose a sequence $(x_m)$ of
normalized vectors in $X_0$ and an increasing sequence $(N_m)$ of positive
integers so that
\begin{align*}
 &\|T^{N_m}x_m\| > m(2C)^m \ \mbox{,} \
 \|T^{N_m}x_k\| < \frac{1}{m} \ \mbox{  for }
k = 1,\ldots,m-1\  \\ & \mbox{ and } \
\frac{1}{N_m} \sum_{i=1}^{N_m} \|T^ix_k\| < \frac{1}{m} \ \mbox{  for }
k = 1,\ldots,m-1.
\end{align*}
Given $\alpha,\beta \in \{0,1\}^\Bbb N$, we say that $\beta\leq \alpha $
if $\beta_i\leq \alpha_i$ for all $i \in \Bbb N$. Let $(r_j)$ be a sequence
of positive integers such that $r_{j+1} \geq 1 + r_j + N_{r_j + 1}$ for all
$j \in \Bbb N$. Let $\alpha \in \{0,1\}^\Bbb N$ be defined by $\alpha_n = 1$ if and
only if $n = r_j$ for some $j \in \Bbb N$. For each $\beta \in \{0,1\}^\Bbb N$ such
that $\beta \leq \alpha$ and $\beta$ contains an infinite number of
$1$'s, we define
$$
x_\beta := \sum_i \frac{\beta_i}{(2C)^i}\, x_i
         = \sum_j \frac{\beta_{r_j}}{(2C)^{r_j}}\, x_{r_j}.
$$
Take $k \in \Bbb N$ with $\beta_{r_k} = 1$. Since
$\|T^{N_{r_k}} x_{r_k}\| >
r_k (2C)^{r_k}$ and
$\displaystyle \|T^N_{r_k} x_s\| <
\frac{1}{r_k}$ for each $s < r_k$, we have that
\begin{align*}
\norm{T^{N_{r_k}}x_\beta}
 &\ge \norm{T^{N_{r_k}} x_{r_k}}
      - \sum_{j \neq k} \frac{\beta_{r_j}}{(2C)^{r_j}} \norm{T^{N_{r_k}} x_{r_j}} \\
 &>   r_k - \frac{1}{r_k} \sum_{j < k} \frac{1}{(2C)^{r_j}}
      - \sum_{j > k} \frac{\norm{x_{r_j}}}{2^{r_j}}
 \ge r_k - 1.
\end{align*}
On the other hand,  since
$\displaystyle \frac{1}{N_{r_k+1}}\sum_{i=1}^{N_{r_k+1}}\norm{T^i x_s} <
\frac{1}{r_k + 1}$ for each $s < r_k + 1$, then
\begin{align*}
 \frac{1}{N_{r_k+1}}\sum_{i=1}^{N_{r_k+1}}\norm{T^i x_\beta}
 &\le  \frac{1}{N_{r_k+1}}\sum_{i=1}^{N_{r_k+1}}\sum_{j \le k} \frac{\beta_{r_j}\|T^i x_{r_j}\|}{(2C)^{r_j}}
    +  \frac{1}{N_{r_k+1}}\sum_{i=1}^{N_{r_k+1}}\sum_{j > k} \frac{\beta_{r_j}\|T^i x_{r_j}\|}{(2C)^{r_j}}\\
 &\le \frac{1}{r_k + 1} \sum_{j \le k} \frac{1}{(2C)^{r_j}}
    +  \frac{1}{N_{r_k+1}}\sum_{i=1}^{N_{r_k+1}}\sum_{j > k} \frac{\|x_{r_j}\|}{2^{r_j}}
 <   \frac{1}{r_k + 1}\cdot
\end{align*}
Thus, $x_\beta$ is irregular for $T$ and $x_\beta$ is distributionally near to zero for $T$.

Now, let $(w_n)$ be a dense sequence in $X_0$ and choose
$\gamma_n \in \{0,1\}^\Bbb N$ ($n \in \Bbb N$) such that each $\gamma_n$
contains an infinite number of $1$'s, $\gamma_n \leq \alpha$ for every
$n \in \Bbb N$, and the sequences $\gamma_n$ have mutually disjoint supports.
Define $v_n := \sum_i \frac{\gamma_{n,i}}{(2C)^i}\, x_i$ and
$y_n:= w_n + \frac{1}{n}\, v_n$ ($n \in \Bbb N$).
Then $Y:= \spa\{y_n : n \in \Bbb N\}$ is a dense subspace of $X$.
Moreover, if $y \in Y \backslash \{0\}$, then we can write
$y = w_0 + \sum_k \frac{\rho_k}{(2C)^k}\, x_k$,
where $w_0 \in X_0$ and the sequence of scalars $(\rho_k)$ takes
only a finite number of values (each of them infinitely many times).
As in the above proof, we can show that the vector
$v := \sum_k \frac{\rho_k}{(2C)^k}\, x_k$
is  irregular for $T$ and distributionally near to zero  for $T$. Since $y = w_0 + v$ and $w_0 \in X_0$,
we conclude that $y$ is also  irregular for $T$ and distributionally near to zero  for $T$.
\end{proof}

\begin{cor}
Let $X$ be a Banach sequence space in which $(e_n)$ is a basis. Suppose that the unilateral weighted backward shift
$$
B_w(x_1,x_2,x_3,\ldots):= (w_2x_2,w_3x_3,w_4x_4,\ldots)
$$
is an operator on $X$. Then they  are equivalents:
\begin{itemize}
\item [\rm (i)] $B_w$ is not power bounded;
\item [\rm (ii)]$B_w$ has a  dense  reiteratively distributionally irregular manifold of type $1^+$.
\end{itemize}

\end{cor}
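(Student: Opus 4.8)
The plan is to establish the two implications separately, with essentially all the content of (i)$\Rightarrow$(ii) already packaged in Theorem \ref{na-dobro}, so that this corollary becomes a verification that its hypotheses hold for $B_w$.

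For the easy direction (ii)$\Rightarrow$(i), I would argue directly from the definitions. A dense reiteratively distributionally irregular manifold of type $1^+$ is in particular a nonzero subspace (as $X$, carrying a basis $(e_n)$, is nonzero and infinite-dimensional), so it contains some nonzero vector $y$ that is irregular for $B_w$. By the definition of an irregular vector, $\limsup_n \|B_w^n y\| = \infty$, i.e.\ the orbit of $y$ is unbounded. Were $B_w$ power bounded, with $\sup_n \|B_w^n\| < \infty$, then $\|B_w^n y\| \le (\sup_n \|B_w^n\|)\,\|y\| < \infty$ for every $n$, contradicting this. Hence $B_w$ is not power bounded.

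For (i)$\Rightarrow$(ii), the strategy is to check the three hypotheses of Theorem \ref{na-dobro} for $T = B_w$ and invoke it. Separability of $X$ is immediate, since a Banach space possessing a Schauder basis is separable. The non-power-boundedness hypothesis of that theorem is exactly our assumption (i). The only point requiring a short argument is the density of $X_0 = \{x \in X : \lim_n \|B_w^n x\| = 0\}$. Here I would exploit the backward-shift structure: from $B_w e_k = w_k e_{k-1}$ for $k > 1$ and $B_w e_1 = 0$, any finitely supported vector $x = \sum_{k=1}^N c_k e_k$ satisfies $B_w^n x = 0$ for all $n \ge N$, so trivially $\|B_w^n x\| \to 0$ and $x \in X_0$. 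Since $(e_n)$ is a basis, the finitely supported vectors are dense in $X$, and therefore so is $X_0$. With all hypotheses met, Theorem \ref{na-dobro} produces a dense reiteratively distributionally irregular manifold of type $1^+$ for $B_w$, which is precisely (ii).

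I do not expect a genuine obstacle in this corollary: it is a clean specialization of Theorem \ref{na-dobro}, and the heavy lifting (the explicit construction of $x_\beta$ and of the dense manifold $Y$) has already been carried out there. The single point deserving care is the density of $X_0$, and specifically the observation that a weighted backward shift annihilates finitely supported vectors after finitely many steps—so their orbits are eventually $0$ rather than merely bounded—which the computation above settles at once.
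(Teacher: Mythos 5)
Your proposal is correct and follows exactly the route the paper intends: the paper leaves this corollary without a written proof, but (as in its earlier analogous corollary) the argument is precisely that finitely supported vectors are annihilated after finitely many iterations of $B_w$, so $X_0$ is dense and Theorem \ref{na-dobro} applies, while the converse is immediate since an irregular vector has unbounded orbit. Your verification of separability and the explicit treatment of (ii)$\Rightarrow$(i) are fine.
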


\vspace{.1in}
\end{document}